\documentclass[11pt]{article}

\usepackage{fullpage}
\usepackage{amsmath,amsthm,amsfonts}
\usepackage{amssymb,latexsym,graphicx}
\usepackage{fourier}
\usepackage{mathtools}
\usepackage{hyperref}
\usepackage{graphicx}
\usepackage{url}

\makeatletter
\def\resetMathstrut@{%
  \setbox\z@\hbox{%
    \mathchardef\@tempa\mathcode`\(\relax
    \def\@tempb##1"##2##3{\the\textfont"##3\char"}%
    \expandafter\@tempb\meaning\@tempa \relax
  }%
  \ht\Mathstrutbox@1.2\ht\z@ \dp\Mathstrutbox@1.2\dp\z@
}
\makeatother

\newtheorem{theorem}{Theorem}[section]

\newtheorem{remark}{Remark}

\newtheorem{lemma}[theorem]{Lemma}
\newtheorem{lem}[theorem]{Lemma}

\newtheorem{corollary}[theorem]{Corollary}

\newcommand{\N}{\ensuremath{\mathbb{N}}}

\newcommand{\R}{\ensuremath{\mathbb{R}}}
\newcommand{\Z}{\ensuremath{\mathbb{Z}}}
\newcommand{\eqdef}{\stackrel{\mathrm{def}}{=}}
\renewcommand{\subset}{\subseteq}

\newcommand{\M}{\mathsf{M}}
\renewcommand{\setminus}{\smallsetminus}

\renewcommand{\epsilon}{\varepsilon}
\renewcommand{\le}{\leqslant}
\renewcommand{\ge}{\geqslant}
\renewcommand{\leq}{\leqslant}
\renewcommand{\geq}{\geqslant}
\newcommand{\Nsimplex}{\triangle^{\!N-1}}

\newcommand{\E}{\mathbb{E}}
\newcommand{\e}{\varepsilon}

\begin{document}
\title{Concentration of Markov chains with bounded moments}

\author{
Assaf Naor\thanks{Mathematics Department,  Princeton University. Supported by the Packard Foundation and the Simons Foundation.  The research that is presented here was conducted under the auspices of the Simons Algorithms and Geometry (A\&G) Think Tank.
}
\and
Shravas Rao\thanks{Courant Institute of Mathematical Sciences, New York
 University. This material is based upon work supported by the National Science Foundation Graduate Research Fellowship Program under Grant No. DGE-1342536.
}
\and
Oded Regev\thanks{Courant Institute of Mathematical Sciences, New York
 University. Supported by the Simons Collaboration on Algorithms and Geometry and by the National Science Foundation  under Grant No.~CCF-1814524. Any opinions, findings, and conclusions or recommendations expressed in this material are those of the authors and do not necessarily reflect the views of the NSF.}
}
\date{}
\maketitle



\begin{abstract}
Let $\{W_t\}_{t=1}^{\infty}$ be a finite state stationary Markov chain, and suppose that $f$ is a real-valued function on the state space.
If $f$ is bounded, then Gillman's {\em expander Chernoff bound} (1993) provides  concentration estimates for the random variable $f(W_1)+\cdots+f(W_n)$ that depend on the spectral gap of the Markov chain and the assumed bound on $f$. Here we obtain analogous  inequalities assuming only that the $q$'th moment of $f$ is bounded for some $q \geq 2$. Our proof  relies on reasoning that differs substantially from the proofs of Gillman's theorem that are available in the literature, and it generalizes to yield dimension-independent bounds for mappings $f$ that take values in an $L_p(\mu)$ for some $p\ge 2$, thus answering (even in the Hilbertian special case $p=2$) a question of Kargin (2007).
\end{abstract}

\section{Introduction}

For  $N\in \N$, write $[N]\eqdef \{1,\ldots,N\}$ and let  $\Nsimplex\eqdef\big\{\pi=(\pi_1,\ldots,\pi_N)\in [0,1]^N:\ \sum_{i=1}^N\pi_i=1\big\}$ be the simplex of probability measures on $[N]$. Given $\pi\in \Nsimplex$, denote  by $E_\pi\in \M_N(\R)$ the  $N$-by-$N$ matrix all of whose rows equal $\pi$, i.e., $E_\pi u=(\sum_{j=1}^N \pi_ju_j,\ldots,\sum_{j=1}^N \pi_ju_j)\in \R^N$ for every $u=(u_1,\ldots,u_N)\in \R^N$.

Given $\pi\in \Nsimplex$, a stochastic matrix $A=(a_{ij})\in \M_N(\R)$ is $\pi$-stationary if $\pi A=\pi$, i.e.,  $\pi_i=\sum_{j=1}^N\pi_ja_{ji}$ for all $i\in [N]$. We then define $\lambda_\pi(A)$ to be the norm of $A-E_\pi$  as an operator from $L_2(\pi)$ to $L_2(\pi)$, i.e.,
$$
\lambda_\pi(A)\eqdef \|A-E_\pi\|_{L_2(\pi)\to L_2(\pi)}=\sup\Bigg\{ \bigg(\sum_{i=1}^N \pi_i\Big(\sum_{j=1}^Na_{ij}u_j-\sum_{k=1}^N \pi_ku_k\Big)^2\bigg)^{\frac12}:\ u\in \R^N\ \mathrm{and}\ \sum_{k=1}^N \pi_ku_k^2=1\Bigg\}.
$$
Note that if $A$ is diagonalizable over the Hilbert space $L_2(\pi)$, then we have $\lambda_\pi(A)=\max\{\lambda_2(A),|\lambda_N(A)|\}$, where $1=\lambda_1(A)\ge \cdots\ge \lambda_N(A)\ge -1$ are the eigenvalues of $A$. This would occur if $A$ were $\pi$-reversible, i.e., $\pi_i a_{ij}=\pi_j a_{ji}$ for all $i,j\in [N]$, in which case $A$ would be a self-adjoint operator on $L_2(\pi)$; the reversible setting is the main case of interest in the ensuing discussion, but reversibility is not needed for our proofs.

Let $\mathbf{W}=\{W_t\}_{t=1}^\infty$ be a Markov chain with state space $[N]$ and transition matrix  $A\in \M_N(\R)$. One says that $\mathbf{W}$ is stationary if $A$ is $\pi_\mathbf{W}$-stationary for $\pi_\mathbf{W}=(\Pr[W_1=1],\ldots,\Pr[W_1=N])\in \Nsimplex$. Write $\lambda_\mathbf{W}=\lambda_{\pi_\mathbf{W}}(A)$.
\begin{theorem}\label{momentbound}
Suppose that $\mathbf{W}=\{W_t\}_{t=1}^{\infty}$ is a  stationary Markov chain whose state space is $[N]$ and  with $\lambda_\mathbf{W}<1$. Then, every $f:[N]\to \R$ satisfies the following inequality for every $n\in \N$ and every $q\ge 2$.
\begin{equation}\label{eq:q in rhs1}
\Bigg(\E\bigg[\Big|\frac{f(W_1)+\cdots+f(W_n)}{n}-\E[f(W_1)]\Big|^q\bigg]\Bigg)^{\frac{1}{q}}\lesssim \sqrt{\frac{q}{(1-\lambda_\mathbf{W})n}}\cdot \Big(\E
\big[|f(W_1)|^q\big]\Big)^{\frac{1}{q}}.
\end{equation}
$\phantom{1}$
\end{theorem}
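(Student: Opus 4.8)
The plan is to combine two soft reductions with a martingale argument, with all of the real work concentrated in a single $L_q$-estimate.

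\smallskip

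\noindent\textbf{Reductions.} First I would center: replacing $f$ by $f-\E[f(W_1)]$ leaves the left-hand side of \eqref{eq:q in rhs1} unchanged and, by the triangle inequality in $L_q(\pi_\mathbf{W})$, at most doubles the right-hand side, so I may assume $\sum_i(\pi_\mathbf{W})_i f(i)=0$; after normalizing it suffices to show $\big\|f(W_1)+\cdots+f(W_n)\big\|_{L_q}\lesssim\sqrt{qn/(1-\lambda_\mathbf{W})}\,\|f\|_{L_q(\pi_\mathbf{W})}$. Second I would reduce to an absolute lower bound on the gap. If $n(1-\lambda_\mathbf{W})\le 1$ this is immediate, since $\|f(W_1)+\cdots+f(W_n)\|_{L_q}\le n\|f\|_{L_q(\pi_\mathbf{W})}\le\sqrt{qn/(1-\lambda_\mathbf{W})}\,\|f\|_{L_q(\pi_\mathbf{W})}$. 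Otherwise set $T=\lceil 1/(1-\lambda_\mathbf{W})\rceil$; for each residue class the subsequence $(W_r,W_{r+T},W_{r+2T},\ldots)$ is a stationary Markov chain with transition matrix $A^T$, and since $A^d-E_{\pi_\mathbf{W}}=(A-E_{\pi_\mathbf{W}})^d$ its gap parameter is $\lambda(A^T)\le\lambda_\mathbf{W}^{T}\le e^{-1}$. Splitting $f(W_1)+\cdots+f(W_n)$ into the $T$ sub-sums along these arithmetic progressions and applying the triangle inequality, everything follows from the case $\lambda_\mathbf{W}\le e^{-1}$, the factor $T\asymp 1/(1-\lambda_\mathbf{W})$ reappearing through $T\cdot\sqrt{q\cdot(n/T)}\asymp\sqrt{qn/(1-\lambda_\mathbf{W})}$.

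\smallskip

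\noindent\textbf{The constant-gap case via a martingale.} So assume now $\lambda:=\lambda_\mathbf{W}\le e^{-1}$ and $\E_{\pi_\mathbf{W}} f=0$; the goal is $\|f(W_1)+\cdots+f(W_n)\|_{L_q}\lesssim\sqrt{qn}\,\|f\|_{L_q(\pi_\mathbf{W})}$. Put $B:=A-E_{\pi_\mathbf{W}}$ and, for a suitable $T_0=T_0(q)$, let $g:=\sum_{j=0}^{T_0-1}B^jf$, so that $(I-A)g=f-B^{T_0}f$ and hence
$$f(W_1)+\cdots+f(W_n)=\sum_{t=2}^{n+1}M_t+g(W_1)-g(W_{n+1})+\sum_{t=1}^{n}(B^{T_0}f)(W_t),$$
where $M_t:=g(W_t)-(Ag)(W_{t-1})$ is a martingale-difference sequence for the natural filtration (indeed $\E[M_t\mid W_{t-1}]=(Ag)(W_{t-1})-(Ag)(W_{t-1})=0$). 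One chooses $T_0=O(q)$ large enough that $\|B^{T_0}f\|_{L_q(\pi_\mathbf{W})}\le\tfrac12\|f\|_{L_q(\pi_\mathbf{W})}$ — this uses only $\|B^{T_0}\|_{L_2\to L_2}\le\lambda^{T_0}$ together with $\|B^{j}\|_{L_\infty\to L_\infty}\le 2$ ($j\ge1$) and Riesz--Thorin interpolation — so the last term is reabsorbed by a recursion on the best constant. Burkholder's inequality then gives $\big\|\sum_t M_t\big\|_{L_q}\lesssim\sqrt q\,\big\|(\sum_t M_t^2)^{1/2}\big\|_{L_q}\le\sqrt{qn}\,\max_t\|M_t\|_{L_q}$, and the boundary terms contribute only $\|g\|_{L_q(\pi_\mathbf{W})}$.

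\smallskip

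\noindent\textbf{The main obstacle.} Everything thus reduces to the estimate $\|M_t\|_{L_q}\lesssim\|f\|_{L_q(\pi_\mathbf{W})}$, uniformly in $t$. Since $(I-A)g=f-B^{T_0}f$ one has $M_t=\big(g(W_t)-g(W_{t-1})\big)+f(W_{t-1})-(B^{T_0}f)(W_{t-1})$, so the crux is to bound the ``discrete gradient'' $\|g(W_t)-g(W_{t-1})\|_{L_q}$ by $O(\|f\|_{L_q(\pi_\mathbf{W})})$. In $L_2$ this is exactly the Poincar\'e-type bound furnished by the spectral gap. The difficulty — and, I expect, precisely where this argument parts ways with the operator-perturbative proofs of Gillman's theorem — is that $B$ need not be a contraction on $L_q(\pi_\mathbf{W})$ for $q>2$, so one cannot control $g=\sum_{j<T_0}B^jf$ by summing the series term by term: the $\Theta(q)$-term plateau of $\sum_j\lambda^{2j/q}$ would cost a spurious factor of $q$. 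Overcoming this requires estimating the relevant resolvent-and-gradient operator \emph{as a whole}, interpolating its $L_2\!\to\!L_2$ bound (from the gap) against its cruder $L_\infty$ (or $L_q$) bound carefully enough to avoid the loss; this is the heart of the proof and the step I would expect to be hardest. Once the scalar estimate is in hand, the passage to arbitrary real $q\ge 2$ is automatic (the argument never used integrality of $q$), and the extension to $L_p(\mu)$-valued $f$ with $p\ge 2$ should be routine, substituting the vector-valued forms of Burkholder's inequality and of Riesz--Thorin, both available in that range.
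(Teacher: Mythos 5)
Your proposal takes a genuinely different route from the paper. The paper does not use martingales or a Poisson-equation corrector at all: it expands $\E\big[(f(W_1)+\cdots+f(W_n))^{2m}\big]$ directly over increasing index tuples, rewrites each summand as $\| U A^{v_1}U\cdots UA^{v_{2m-1}}u\|_{L_1(\pi)}$, splits $A^{v_j}=E_\pi+(A-E_\pi)^{v_j}$, and then bounds the resulting products by an iterated H\"older inequality in which the exponent applied to the $j$-th factor $A^{v_j}-E_\pi$ is \emph{position-dependent}, namely $p(s,j)=4m/(2m+i_1+i_2-2j)$, sweeping from near $L_2$ at the edges of a run of nontrivial factors to larger exponents in the middle. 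The moment bound is then obtained by Riesz--Thorin applied to each factor separately, a careful enumeration of the combinatorics, and a final interpolation between the even integers $2m\le q\le 2(m+1)$. Your martingale framework, by contrast, would funnel all the geometry of the chain into a single estimate $\|g(W_t)-g(W_{t-1})\|_{L_q}\lesssim\|f\|_{L_q(\pi_\mathbf{W})}$ for the Poisson-equation solution $g=\sum_{j<T_0}B^jf$.

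That single estimate is exactly where your proposal has a genuine gap, and you say so yourself, but let me be more pointed about why I doubt it can be closed within your framework. After reducing to $\lambda\le e^{-1}$, you need $T_0\gtrsim q$ to make $\|B^{T_0}f\|_{L_q}\le\tfrac12\|f\|_{L_q}$ via Riesz--Thorin, since $\|B^{j}\|_{L_q\to L_q}\le 2\lambda^{2j/q}$. But then the same interpolated bounds force $\|g\|_{L_q}\le\sum_{j<T_0}\|B^j\|_{L_q\to L_q}\|f\|_{L_q}\lesssim\big(1-\lambda^{2/q}\big)^{-1}\|f\|_{L_q}\asymp q\,\|f\|_{L_q}$, which after Burkholder costs you a full factor of $q$. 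You can soften this by reducing to a much smaller gap: if you run the time-slicing reduction with $T\asymp\alpha/(1-\lambda)$ so that the sub-chain has $\lambda(A^T)\le e^{-\alpha}$, the reduction costs $\sqrt{\alpha}$ while the resolvent sum becomes $\asymp\max(1,q/\alpha)$; balancing at $\alpha\asymp q$ still leaves you off by $\sqrt q$. The only escape would be to replace the crude bound $\|g(W_t)-g(W_{t-1})\|_{L_q}\le 2\|g\|_{L_q}$ by a genuine $L_q$ carr\'e-du-champ/Dirichlet-form estimate that exhibits the cancellation visible in $L_2$ (where $\E[(g(W_t)-g(W_{t-1}))^2]=2\langle g,(I-A)g\rangle_{L_2(\pi)}$ picks up the extra factor $1-\lambda_k$); but for $q>2$ and a general finite-state chain with only a spectral-gap hypothesis I see no route to such a bound — the $L_\infty\to L_\infty$ norm of $(I-B)^{-1}$ on mean-zero functions really can grow without bound, so interpolation against $L_\infty$ cannot save you, and nothing in your set-up gives you a better endpoint. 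So this is not merely ``the hardest step left to the reader'': as structured, your argument appears to give $\sqrt{q/((1-\lambda)n)}$ only up to a parasitic factor between $\sqrt q$ and $q$, and I do not believe the loss can be removed without abandoning the uniform bound on $\max_t\|M_t\|_{L_q}$ as the input to Burkholder. The paper's distribution of the H\"older exponents across the \emph{entire} product of $2m$ factors, rather than factor by factor, is precisely the device that avoids this loss, and it has no analogue in your decomposition.

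The surrounding reductions (centering, and the trivial case $n(1-\lambda)\lesssim 1$) are fine and match the paper's, and the passage from even integers to general $q\ge 2$ at the end is indeed routine (the paper also interpolates between consecutive even integers).
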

The (standard) asymptotic notation $\lesssim$ that appears in~\eqref{eq:q in rhs1} (as well as throughout the ensuing discussion) means the following. Given two quantities $\alpha,\beta\in [0,\infty)$, the notation $\alpha\lesssim \beta$ stands  for the assertion that there exists a universal constant $C\in (0,\infty)$ for which $\alpha\le C\beta$; this is also denoted by $\beta\gtrsim \alpha$.

The conclusion~\eqref{eq:q in rhs1} of Theorem~\ref{momentbound} with the random variables $f(W_1),\ldots,f(W_n)$ replaced by i.i.d.\ random variables coincides with the classical Marcinkiewicz--Zygmund inequality~\cite{MZ37}. Our contribution here is therefore to generalize this statement to random variables that are (images of) stationary Markov chains with a spectral gap; the i.i.d.\ setting is the special case $A=E_\pi$ of Theorem~\ref{momentbound}. The bound~\eqref{eq:q in rhs1} is optimal; see Remark~\ref{rem:optimal} below. A variant of Theorem~\ref{momentbound} when $1\le q\le 2$ appears in Remark~\ref{rem:q<2} below.

The precursor (and inspiration) of Theorem~\ref{momentbound} is the following theorem of Gillman~\cite{Gil93,G98}.

\begin{theorem}\label{thm:quote gillman}
Suppose that $\mathbf{W}=\{W_t\}_{t=1}^{\infty}$ is a  stationary Markov chain whose state space is $[N]$ and  with $\lambda_\mathbf{W}<1$.  Then, every $f:[N]\to \R$ satisfies the following inequality for every $n\in \N$ and every $q\ge 2$.
\begin{equation}\label{eq:infty in rhs}
\Bigg(\E\bigg[\Big|\frac{f(W_1)+\cdots+f(W_n)}{n}-\E[f(W_1)]\Big|^q\bigg]\Bigg)^{\frac{1}{q}}\lesssim \sqrt{\frac{q}{(1-\lambda_\mathbf{W})n}}\cdot \max\left\{|f(1)|,\ldots,|f(N)|\right\}.
\end{equation}
\end{theorem}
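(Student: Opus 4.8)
The plan is to deduce Theorem~\ref{thm:quote gillman} directly from Theorem~\ref{momentbound}, which has already been stated. Because $W_1$ takes values in the finite set $[N]$ and $\pi_{\mathbf{W}}$ is a probability vector, for every $q\ge 2$ we have the trivial estimate
\[
\Big(\E\big[|f(W_1)|^q\big]\Big)^{\frac1q}=\Big(\sum_{i=1}^N(\pi_{\mathbf{W}})_i\,|f(i)|^q\Big)^{\frac1q}\le\max\big\{|f(1)|,\ldots,|f(N)|\big\},
\]
since $\sum_{i=1}^N(\pi_{\mathbf{W}})_i=1$. Substituting this into the right-hand side of~\eqref{eq:q in rhs1} yields~\eqref{eq:infty in rhs} with the same universal constant, so conditionally on Theorem~\ref{momentbound} there is nothing left to do. We would record Theorem~\ref{thm:quote gillman} separately only because the hypothesis that $f$ is bounded is the one that occurs in most applications of expander Chernoff bounds, and because it is the original formulation of Gillman~\cite{Gil93,G98}.

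For contrast, here is the shape of a self-contained argument that does not use Theorem~\ref{momentbound}, namely the classical exponential-moment/perturbation method. For a parameter $r>0$ set $D_r\eqdef\diag\big(e^{rf(1)},\ldots,e^{rf(N)}\big)$; then, for the stationary chain,
\[
\E\Big[e^{r(f(W_1)+\cdots+f(W_n))}\Big]=\big\langle\pi_{\mathbf{W}},\ D_r(AD_r)^{n-1}\mathbf 1\big\rangle,
\]
where $\langle\cdot,\cdot\rangle$ is the Euclidean inner product on $\R^N$ and $\mathbf 1=(1,\ldots,1)$. Thus the problem reduces to controlling the leading eigenvalue of $AD_r$ on the Hilbert space $L_2(\pi_{\mathbf{W}})$ (equivalently, of $\diag(\sqrt{\pi_{\mathbf{W}}})\,AD_r\,\diag(\sqrt{\pi_{\mathbf{W}}})^{-1}$ on $\ell_2$). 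Writing $A=E_{\pi_{\mathbf{W}}}+(A-E_{\pi_{\mathbf{W}}})$, treating $r$ as a small parameter, and invoking $\|A-E_{\pi_{\mathbf{W}}}\|_{L_2(\pi_{\mathbf{W}})\to L_2(\pi_{\mathbf{W}})}=\lambda_{\mathbf{W}}<1$, one uses analytic perturbation theory to show that for $r\lesssim(1-\lambda_{\mathbf{W}})/\max_i|f(i)|$ the leading eigenvalue of $AD_r$ is at most $\exp\big(r\E[f(W_1)]+Cr^2(\max_i|f(i)|)^2/(1-\lambda_{\mathbf{W}})\big)$ for a universal $C$. A Markov-type inequality then produces a subgaussian tail for $\tfrac1n\sum_{t=1}^n f(W_t)-\E[f(W_1)]$ at scale $(\max_i|f(i)|)^2/((1-\lambda_{\mathbf{W}})n)$ in the range where that bound is non-trivial (outside that range, $\big|\tfrac1n\sum_{t=1}^n f(W_t)-\E[f(W_1)]\big|\le\max_i|f(i)|$ already suffices), and the standard conversion of subgaussian tails into $L_q$-norm bounds gives~\eqref{eq:infty in rhs}.

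Via Theorem~\ref{momentbound} there is no obstacle. In the self-contained route the delicate step is the perturbation estimate for the top eigenvalue of $AD_r$: it must be uniform in the state-space size $N$, and the dependence on the spectral gap $1-\lambda_{\mathbf{W}}$ must be tracked exactly, since any lossy bound there would spoil the $\sqrt{q/((1-\lambda_{\mathbf{W}})n)}$ scaling. This is precisely the technical core of Gillman's argument and of its later sharpenings, and it is the part that the approach of the present paper replaces by a more robust method that additionally handles vector-valued $f$.
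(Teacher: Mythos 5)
Your first paragraph is correct and is exactly the reduction the paper has in mind: since $\pi_{\mathbf{W}}$ is a probability vector, $(\E[|f(W_1)|^q])^{1/q}\le\max_j|f(j)|$, so Theorem~\ref{thm:quote gillman} is an immediate consequence of Theorem~\ref{momentbound}. The paper itself does not reprove Theorem~\ref{thm:quote gillman} — it is stated with a citation to Gillman as a known result and as the precursor that motivates the strengthening — but the text explicitly observes that~\eqref{eq:q in rhs1} differs from~\eqref{eq:infty in rhs} only in replacing the worst-case bound on $f$ by an average-case bound, which is precisely your one-line deduction. So, modulo Theorem~\ref{momentbound} (whose proof is the real content of the paper), your argument is complete.

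The remainder of your write-up sketches the classical exponential-moment / perturbation-theoretic route, which is indeed the shape of Gillman's original argument and of most of its successors. That sketch is accurate at the level of a road map, but as you note yourself, the hard step — the uniform-in-$N$, gap-exact perturbation estimate for the leading eigenvalue of $AD_r$ — is not carried out, so it is not a self-contained proof. Since you offer it only as contrast, that is not a defect; just be aware that it is that step, not the reduction to moments, that is the genuine technical content of Gillman's theorem, and it is precisely what the paper's moment-based argument for Theorem~\ref{momentbound} circumvents in order to also handle $L_q$ bounds on $f$ and vector-valued targets.
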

Note that Theorem~\ref{thm:quote gillman} is typically stated in the literature as the following concentration inequality, which is commonly called the {\em expander Chernoff bound}.
\begin{equation}\label{eq:tail version}
\forall\, a>0,\qquad \Pr \bigg[\Big\vert\frac{f(W_1)+\cdots+f(W_n)}{n}-\E[f(W_1)]\Big|\ge a\max_{j\in [N]} |f(j)|\bigg] \lesssim e^{-c(1-\lambda_\mathbf{W})n a^2},
\end{equation}
where $c>0$ is a universal constant. The equivalence of~\eqref{eq:infty in rhs} and~\eqref{eq:tail version} is standard;  $\eqref{eq:infty in rhs}\implies\eqref{eq:tail version}$ is checked by applying Markov's inequality and optimizing over $q$, and $\eqref{eq:tail version}\implies\eqref{eq:infty in rhs}$ follows by straightforward integration (both implications appear in~Proposition~2.5.2 of the textbook~\cite{Ver18}). The same use of Markov's inequality shows mutatis mutandis that Theorem~\ref{momentbound} implies the following concentration phenomenon.
\begin{corollary}\label{cor:t in range}There is a universal constant $c>0$ with the following property. Suppose that $\mathbf{W}=\{W_t\}_{t=1}^{\infty}$ is a  stationary Markov chain whose state space is $[N]$ and  with $\lambda_\mathbf{W}<1$. Then, every $f:[N]\to \R$ satisfies the following inequality for every $n\in \N$, every $q\ge 2$ and every $0<a\le \sqrt{q/((1-\lambda_{\mathbf{W}})n)}$.
\begin{equation*}
 \Pr \bigg[\Big|\frac{f(W_1)+\cdots+f(W_n)}{n}-\E[f(W_1)]\Big|\ge a\Big(\E
\big[|f(W_1)|^q\big]\Big)^{\frac{1}{q}}\bigg] \lesssim e^{-c(1-\lambda_\mathbf{W})n a^2}.
\end{equation*}
\end{corollary}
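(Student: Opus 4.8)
The plan is to obtain Corollary~\ref{cor:t in range} from Theorem~\ref{momentbound} by the same route that takes~\eqref{eq:infty in rhs} to~\eqref{eq:tail version}, namely an application of Markov's inequality to a suitable power followed by an optimization of the exponent. The one genuine wrinkle is that the right-hand side of~\eqref{eq:q in rhs1} itself depends on the exponent (through $\|f(W_1)\|_{L_q(\pi_\mathbf{W})}$), so one cannot send the exponent to infinity as in the bounded case and must instead optimize only within the window $[2,q]$ that the hypothesis on $a$ provides.

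Concretely, I would fix $q\ge2$ and $0<a\le\sqrt{q/((1-\lambda_\mathbf{W})n)}$, abbreviate $\gamma\eqdef1-\lambda_\mathbf{W}$, $b\eqdef\gamma na^2$ (so the hypothesis on $a$ reads exactly $b\le q$), $S_n\eqdef\frac1n\sum_{t=1}^nf(W_t)-\E[f(W_1)]$, and $M_r\eqdef\big(\E[|f(W_1)|^r]\big)^{1/r}$ for $r\ge1$, where we may assume $M_q>0$. For every real $s\in[2,q]$, the monotonicity of $L_r$-norms on the probability space $([N],\pi_\mathbf{W})$ gives $M_s\le M_q$, hence $\{|S_n|\ge aM_q\}\subseteq\{|S_n|\ge aM_s\}$, and Markov's inequality applied to $|S_n|^s$ together with Theorem~\ref{momentbound} (whose implied constant $C$ we may assume satisfies $C\ge1$) yields
\[
\Pr\big[|S_n|\ge aM_q\big]\ \le\ \frac{\E\big[|S_n|^s\big]}{a^sM_s^s}\ \le\ \left(\frac{C\sqrt{s/(\gamma n)}\;M_s}{aM_s}\right)^{\!s}=\left(\frac{C^2s}{b}\right)^{\!s/2}.
\]
It remains to choose $s$. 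When $b\ge2C^2e^2$, the choice $s\eqdef b/(C^2e^2)$ lies in $[2,q]$ (since $2C^2e^2\le b\le q$ and $C\ge1$) and makes the right-hand side equal to $e^{-b/(C^2e^2)}$; when $b<2C^2e^2$, the trivial bound $\Pr[\,\cdot\,]\le1\le e^2e^{-b/(C^2e^2)}$ suffices. In either case $\Pr[|S_n|\ge aM_q]\lesssim e^{-c\gamma na^2}$ with $c\eqdef1/(C^2e^2)$, which is the assertion of the corollary.

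Since this is a direct consequence of Theorem~\ref{momentbound}, I do not expect a real obstacle; the only subtlety --- and the reason the deduction is not verbatim ``the same use of Markov's inequality'' as in Gillman's bounded setting --- is the exponent-dependence of the right-hand side of~\eqref{eq:q in rhs1}, which is handled above by first comparing thresholds via $aM_s\le aM_q$ and then optimizing the exponent only over $s\in[2,q]$, the range in which the hypothesis $a\le\sqrt{q/((1-\lambda_\mathbf{W})n)}$ keeps the bound useful.
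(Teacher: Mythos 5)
Your argument is correct and takes essentially the same route that the paper indicates (Markov's inequality applied to the $s$'th power followed by optimization of $s$), with the one necessary modification — comparing the threshold $aM_q$ to $aM_s$ via monotonicity of $L_r$-norms and restricting the optimization to $s\in[2,q]$ — handled exactly right. The paper only sketches this deduction as "mutatis mutandis" the same as the bounded-case argument; your write-up supplies the missing detail cleanly, so there is nothing to fix.
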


\begin{remark}{\em Kloeckner investigated in~\cite{Kloeckner19} the question of obtaining concentration bounds such as~\eqref{eq:tail version} with the $L_\infty$ norm  $\max_{j\in [N]} |f(j)|$ replaced by other norms of $f$.  As discussed in~\cite[Remark~2.2]{Kloeckner19}, the results of~\cite{Kloeckner19} hold in a setting that imposes structural hypotheses on the aforementioned norm of the ``observable'' $f$ which notably excludes its $L_q(\pi_\mathbf{W})$ norm (which appears  in the right-hand side of the bound~\eqref{eq:q in rhs1} that we prove here), but it is noted in~\cite[Remark~2.2]{Kloeckner19} that ``classically one only makes moment assumptions on the observable.'' Corollary~\ref{cor:t in range} addresses this question, though note that~\cite{Kloeckner19} also covers settings that are not treated here.}\end{remark}

The new bound~\eqref{eq:q in rhs1}  that we obtain differs from Gillman's estimate~\eqref{eq:infty in rhs} only in the replacement of the worst-case bound on $f$ in the right-hand side of~\eqref{eq:infty in rhs} by an average-case bound. Rather than being merely a quantitative enhancement, this improvement  has conceptual significance which we achieve through a reasoning  that  differs substantially from the proof of~\eqref{eq:tail version} in~\cite{Gil93,G98}, as well as the several other proofs of~\eqref{eq:tail version} and its variants that appeared in the literature~\cite{D95, K97, L98, LCP04,K07,Wagner08,ChungLLM12,P15,GLSS17,FJS18,Kloeckner19} (our approach  was recently used in~\cite{RR17,R18}).

Assuming a bound on the $q$'th moment of $f$  is the appropriate setting for bounding the $q$'th moment of $f(W_1)+\cdots+f(W_n)$. This compatibility of the left-hand side of~\eqref{eq:q in rhs1}  and the right-hand side of~\eqref{eq:q in rhs1}  allows the resulting inequality to tensorize so as to yield dimension-independent vector-valued statements. Specifically, for any measure space $(\Omega,\mu)$, if $f:[N]\to L_q(\mu)$, then by applying~\eqref{eq:q in rhs1} to the real-valued mapping $(i\in [N])\mapsto f(i)(\omega)$ for each $\omega\in \Omega$, and then integrating the ($q$'th power of) the resulting point-wise inequality, we see that (under the assumptions of Theorem~\ref{momentbound}),
\begin{equation}\label{eq:q in rhs Lq power q}
\Bigg(\E\bigg[\Big\|\frac{f(W_1)+\cdots+f(W_n)}{n}-\E[f(W_1)]\Big\|_{L_q(\mu)}^q\bigg]\Bigg)^{\frac{1}{q}} \lesssim \sqrt{\frac{q}{(1-\lambda_\mathbf{W})n}}\cdot \Big(\E
\big[\|f(W_1)\|_{L_q(\mu)}^q\big]\Big)^{\frac{1}{q}}.
\end{equation}
The following Hilbertian statement is a consequence of~\eqref{eq:q in rhs Lq power q} that deserves to be stated separately.

\begin{corollary}\label{cor:hilbert moment}
Suppose that $\mathbf{W}=\{W_t\}_{t=1}^{\infty}$ is a  stationary Markov chain whose state space is $[N]$ and  with $\lambda_\mathbf{W}<1$. Let $(H,\|\cdot\|_H)$ be a Hilbert space. The following bound holds for all $n\in \N$,  $q\ge 2$ and $f:[N]\to H$.
\begin{equation}\label{eq:q in rhs}
\Bigg(\E\bigg[\Big\|\frac{f(W_1)+\cdots+f(W_n)}{n}-\E[f(W_1)]\Big\|_H^q\bigg]\Bigg)^{\frac{1}{q}} \lesssim \sqrt{\frac{q}{(1-\lambda_\mathbf{W})n}}\cdot \Big(\E
\big[\|f(W_1)\|_H^q\big]\Big)^{\frac{1}{q}}.
\end{equation}
\end{corollary}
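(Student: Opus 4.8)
The plan is to deduce the Hilbertian bound~\eqref{eq:q in rhs} from the $L_q(\mu)$-valued inequality~\eqref{eq:q in rhs Lq power q}, which the discussion above has already derived from Theorem~\ref{momentbound}, by invoking the classical fact that a Hilbert space embeds linearly into $L_q$ of a suitable measure space as a scalar multiple of an isometry. First I would reduce to the finite-dimensional case: since $f$ takes at most $N$ values, the vectors $f(1),\dots,f(N)$ --- and hence $\E[f(W_1)]$, being a convex combination of them --- span a subspace of $H$ of dimension $d\le N$, so we may assume $H=\ell_2^d$. Let $\gamma_d$ denote the standard Gaussian measure on $\R^d$, let $x_1,\dots,x_d\in L_q(\R^d,\gamma_d)$ be the coordinate functions, and let $\iota:\ell_2^d\to L_q(\R^d,\gamma_d)$ be the linear map with $\iota(e_i)=x_i$. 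For $a\in \R^d$ the function $\iota(a)=\sum_{i=1}^d a_i x_i$ is a centered Gaussian with variance $\|a\|_{\ell_2^d}^2$, whence $\|\iota(a)\|_{L_q(\gamma_d)}=\kappa_q\|a\|_{\ell_2^d}$, where $\kappa_q\eqdef \big(\E[|g|^q]\big)^{1/q}$ for $g$ a standard real Gaussian; thus $\iota$ is exactly $\kappa_q$ times a linear isometric embedding.

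Next I would apply~\eqref{eq:q in rhs Lq power q} to the $L_q(\R^d,\gamma_d)$-valued map $\iota\circ f:[N]\to L_q(\R^d,\gamma_d)$. Since $\iota$ is linear and $W_1$ has finite range, $\iota$ commutes with the (finite) averages and expectations occurring in~\eqref{eq:q in rhs Lq power q}, so after applying that inequality to $\iota\circ f$, each vector on either side is $\iota$ applied to the corresponding $\ell_2^d$-valued vector appearing in~\eqref{eq:q in rhs}. Taking norms and using $\|\iota(\cdot)\|_{L_q(\gamma_d)}=\kappa_q\|\cdot\|_{\ell_2^d}$, the factor $\kappa_q$ appears with the same power on both sides and cancels, leaving precisely~\eqref{eq:q in rhs} with $H=\ell_2^d$, hence for the original $H$.

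I do not expect a genuine obstacle here: the substantive work is Theorem~\ref{momentbound} together with the pointwise-integration argument that upgrades it to~\eqref{eq:q in rhs Lq power q}, after which the corollary is soft. The one point worth stating carefully is the cancellation of the $q$-dependent embedding constant $\kappa_q$ --- this is exactly what prevents an extra $q$-dependent loss and makes the right-hand side of~\eqref{eq:q in rhs} inherit the same $\sqrt{q/((1-\lambda_\mathbf{W})n)}$ prefactor as the scalar bound. It is also worth noting that the reduction to finite dimensions at the outset is convenient precisely because it lets one use the elementary Gaussian embedding on $\R^d$ rather than having to construct product Gaussian measures indexed by a basis of a general, possibly non-separable, Hilbert space.
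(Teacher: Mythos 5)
Your argument is correct and is essentially the same as the paper's: the paper deduces Corollary~\ref{cor:hilbert moment} by applying~\eqref{eq:q in rhs Lq power q} to an isometric copy of $H$ inside $L_q(\mu)$, citing Banach's classical embedding theorem for the existence of that copy. You instead construct the embedding explicitly via Gaussian coordinate functions (after a harmless reduction to $H=\ell_2^d$), obtaining $\kappa_q$ times an isometry whose constant cancels on the two sides --- which is precisely the standard proof of the cited embedding theorem, so the two arguments coincide in substance.
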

Corollary~\ref{cor:hilbert moment} is nothing more than~\eqref{eq:q in rhs Lq power q} applied to an isometric copy of $H$ in $L_q(\mu)$, which is known to exist by~\cite[Chapter~12]{Ban32} (see also the exposition in, e.g., the textbook~\cite[Proposition~6.4.12]{AK16}).

Since $\E
[\|f(W_1)\|_H^q]\le \max_{j\in [N]} \|f(j)\|_H^q$, the following corollary is a consequence Corollary~\ref{cor:hilbert moment} through the usual application of Markov's inequality and then an optimization over $q$.
\begin{corollary}[Hilbert space-valued expander Chernoff bound]\label{cor:Hilbert-dim independent}
There is a universal constant $c>0$ with the following property. Suppose that $\mathbf{W}=\{W_t\}_{t=1}^{\infty}$ is a  stationary Markov chain whose state space is $[N]$ and  with $\lambda_\mathbf{W}<1$. Let $(H,\|\cdot\|_H)$ be a Hilbert space. If $f:[N]\to H$, then  for all $n\in \N$ and $a>0$ we have
\begin{equation}\label{eq:tail version hilbert1}
\Pr \bigg[\Big\|\frac{1}{n}\sum_{i=1}^nf(W_i)-\E[f(W_1)]\Big\|_H\ge a\max_{j\in [N]} \|f(j)\|_H\bigg] \lesssim e^{-c(1-\lambda_\mathbf{W})n a^2}.
\end{equation}
\end{corollary}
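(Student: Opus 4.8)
The plan is to derive Corollary~\ref{cor:Hilbert-dim independent} from the vector-valued moment bound of Corollary~\ref{cor:hilbert moment} by the same routine passage from moment growth to a subgaussian tail that converts~\eqref{eq:infty in rhs} into~\eqref{eq:tail version}. Write $M\eqdef \max_{j\in[N]}\|f(j)\|_H$ and $S_n\eqdef \frac1n\sum_{i=1}^n f(W_i)-\E[f(W_1)]$. Since trivially $\E\big[\|f(W_1)\|_H^q\big]\le M^q$, Corollary~\ref{cor:hilbert moment} supplies a universal constant $C>0$ such that $\big(\E[\|S_n\|_H^q]\big)^{1/q}\le C\sqrt{q/((1-\lambda_\mathbf{W})n)}\cdot M$ for every $q\ge 2$.

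First I would apply Markov's inequality to the nonnegative random variable $\|S_n\|_H^q$, obtaining for every $a>0$ and every $q\ge 2$
\[
\Pr\big[\|S_n\|_H\ge aM\big]=\Pr\Big[\|S_n\|_H^q\ge a^qM^q\Big]\le \frac{\E[\|S_n\|_H^q]}{a^qM^q}\le \left(\frac{C^2q}{(1-\lambda_\mathbf{W})na^2}\right)^{q/2}.
\]
It then remains only to choose $q$. If $(1-\lambda_\mathbf{W})na^2\ge 2eC^2$, take $q\eqdef (1-\lambda_\mathbf{W})na^2/(eC^2)\ge 2$ (admissible since Corollary~\ref{cor:hilbert moment} holds for every real $q\ge 2$); the parenthesised quantity then equals $1/e$, so the right-hand side becomes $e^{-q/2}=e^{-(1-\lambda_\mathbf{W})na^2/(2eC^2)}$, which is of the asserted form with $c\eqdef 1/(2eC^2)$. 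If instead $(1-\lambda_\mathbf{W})na^2< 2eC^2$, then $e^{-c(1-\lambda_\mathbf{W})na^2}> e^{-1}$, so the claimed inequality holds vacuously after absorbing the factor $e$ into the implicit constant of $\lesssim$, using only that a probability never exceeds $1$.

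I do not expect a genuine obstacle here: the computation is elementary, and the only point demanding a little care is the bookkeeping around the constraint $q\ge 2$, namely that the near-optimal exponent is admissible precisely in the regime where the tail bound is nontrivial, while the complementary regime is disposed of by the trivial bound $\Pr[\,\cdot\,]\le 1$. In fact the whole argument is an instance of the standard equivalence between subgaussian moment growth and subgaussian tails recorded in~\cite[Proposition~2.5.2]{Ver18}, applied to $\|S_n\|_H$, so one could also simply invoke that proposition.
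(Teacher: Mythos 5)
Your argument is correct and is exactly the derivation the paper intends: bound $\E[\|f(W_1)\|_H^q]$ by $\max_j\|f(j)\|_H^q$, apply Markov's inequality to $\|S_n\|_H^q$ using Corollary~\ref{cor:hilbert moment}, and optimize over $q\ge 2$, with the regime $(1-\lambda_\mathbf{W})na^2\lesssim 1$ handled trivially. The bookkeeping around the constraint $q\ge 2$ is handled correctly, matching the standard equivalence the paper cites from~\cite[Proposition~2.5.2]{Ver18}.
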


\begin{remark}{\em Kargin studied~\cite{K07} the vector-valued setting of Gillman's theorem for functions that take values in the $m$-dimensional  Euclidean space $\ell_2^m$. The statement that is obtained in~\cite{K07}  is the same as that of Corollary~\ref{cor:Hilbert-dim independent}, except that it is dimension-dependent; specifically, with the implicit constant in~\eqref{eq:tail version hilbert1} growing to $\infty$ exponentially with $m$. Thus, the main new feature of Corollary~\ref{cor:Hilbert-dim independent} is that it is dimension-independent. Obtaining such a bound was a main question that~\cite{K07}  left open; see~\cite[Section~4]{K07}.
}\end{remark}

Observe that estimates such as~\eqref{eq:q in rhs Lq power q} can be interpreted as  bounds on the operator norm of  a certain linear operator between vector-valued $L_q$-spaces. Specifically, suppose that $(X,\|\cdot\|_X)$ is a Banach space. Let $\mathbf{W}=\{W_t\}_{t=1}^{\infty}$ be a  stationary Markov chain whose state space is $[N]$ and with $\lambda_\mathbf{W}<1$. Denote (as before) the stationary measure of $\mathbf{W}$ by $\pi_\mathbf{W}$ and let the transition matrix of $\mathbf{W}$ be $A=(a_{ij})\in \M_N(\R)$. For each $n\in \N$ denote  the associated probability measure on the trajectories of length $n$ by $\tau^n_\mathbf{W}:[N]^n\to [0,1]$. Thus, $\tau^n_\mathbf{W}$ is the probability measure on $[N]^n$ that is given by $\tau^1_\mathbf{W}=\pi_\mathbf{W}$ if $n=1$, and for $n\ge 2$,
$$
\forall(i_1,\ldots,i_n)\in [N]^n,\qquad \tau^n_\mathbf{W}(i_1,\ldots,i_n)\eqdef \Pr\big[(W_1,\ldots,W_n)=(i_1,\ldots,i_n)\big]=\pi_\mathbf{W}(i_1)a_{i_1i_2}a_{i_2i_3}\cdots a_{i_{n-1}i_n}.
$$

Define a linear operator $T_X:L_q(\pi_\mathbf{W};X)\to L_q(\tau^n_\mathbf{W};X)$ by setting for $f:[N]\to X$,
\begin{equation}\label{eq:def TX}
\forall(i_1,\ldots,i_n)\in [N]^n,\qquad T_Xf(i_1,\ldots,i_n)\eqdef \frac{1}{n}\sum_{k=1}^n f(i_k)-\sum_{j=1}^N\pi_\mathbf{W}(j)f(j) \in X.
\end{equation}
Here, and in what follows, we are using standard notation for vector-valued Lebesgue--Bochner spaces, though throughout we will need to consider only finitely supported measures, in which case measurability issues do not need to be discussed. So, if $(S,\sigma)$ is a probability space with $|S|<\infty$, then the Banach space $L_q(\sigma;X)$ is the vector space of all mapping $\psi:S\to X$, equipped with the norm
$$
\|\psi\|_{L_q(\sigma;X)}=\bigg(\sum_{s\in S} \sigma(s)\|\psi(s)\|_X^q\bigg)^{\frac{1}{q}}.
$$

The validity of~\eqref{eq:q in rhs Lq power q}  under the assumptions of Theorem~\ref{momentbound} is the same as the operator norm bound
\begin{equation}\label{eq:q to interpolate}
\|T_{L_q(\mu)}\|_{L_q\left(\pi_\mathbf{W};L_q(\mu)\right)\to L_q\left(\tau_{\mathbf{W}}^n;L_q(\mu)\right)}\lesssim \sqrt{\frac{q}{(1-\lambda_\mathbf{W})n}}.
\end{equation}
In the same vein, Corollary~\ref{cor:hilbert moment} is (under the same assumptions) the same as
\begin{equation}\label{eq:2 to interpolate}
\|T_{L_2(\mu)}\|_{L_q\left(\pi_\mathbf{W};L_2(\mu)\right)\to L_q\left(\tau_{\mathbf{W}}^n;L_2(\mu)\right)}\lesssim \sqrt{\frac{q}{(1-\lambda_\mathbf{W})n}}.
\end{equation}
By Calder\'on's vector-valued extension~\cite{Cal64}  of the Riesz--Thorin~\cite{Rie27,Tho48} interpolation theorem (see the monograph~\cite{BL76} for  background on complex interpolation; the specific statement that we are using here is a combination of Theorem~4.1.2 and Theorem~5.1.2 in~\cite{BL76}), it follows from~\eqref{eq:q to interpolate} and~\eqref{eq:2 to interpolate} that for every $p\in [2,q]$ we have
\begin{align*}
\|T_{L_p(\mu)}\|_{L_q\left(\pi_\mathbf{W};L_p(\mu)\right)\to L_q\left(\tau_{\mathbf{W}}^n;L_p(\mu)\right)}
\lesssim \sqrt{\frac{q}{(1-\lambda_\mathbf{W})n}}.
\end{align*}
We record this conclusion as the following generalization of Corollary~\ref{cor:hilbert moment} and Corollary~\ref{cor:Hilbert-dim independent}.

\begin{corollary}\label{cor:q>p} Suppose that $p\ge 2$ and that $(\Omega,\mu)$ is a measure space. Let $\mathbf{W}=\{W_t\}_{t=1}^{\infty}$ be a  stationary Markov chain whose state space is $[N]$ and  with $\lambda_\mathbf{W}<1$. If $f:[N]\to L_p(\mu)$, then for all $n\in \N$ and $q\ge p$,
\begin{equation}\label{eq:q in rhs Lp power q}
\Bigg(\E\bigg[\Big\|\frac{f(W_1)+\cdots+f(W_n)}{n}-\E[f(W_1)]\Big\|_{L_p(\mu)}^q\bigg]\Bigg)^{\frac{1}{q}} \lesssim \sqrt{\frac{q}{(1-\lambda_\mathbf{W})n}}\cdot \Big(\E
\big[\|f(W_1)\|_{L_p(\mu)}^q\big]\Big)^{\frac{1}{q}}.
\end{equation}
Consequently, by the usual combination of~\eqref{eq:q in rhs Lp power q} with Markov's inequality, followed by optimization over $q\ge p$, there exists a universal constant $c\in (0,\infty)$ such that
\begin{equation}\label{eq:tail version hilbert}
\forall\, a>0,\qquad \Pr \bigg[\Big\|\frac{f(W_1)+\cdots+f(W_n)}{n}-\E[f(W_1)]\Big\|_{L_p(\mu)}\ge a\max_{j\in [N]} \|f(j)\|_{L_p(\mu)}\bigg] \lesssim e^{p-c(1-\lambda_\mathbf{W})n a^2}.
\end{equation}
\end{corollary}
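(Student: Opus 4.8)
The plan is to observe that the first assertion \eqref{eq:q in rhs Lp power q} is merely the probabilistic reformulation of the operator-norm bound $\|T_{L_p(\mu)}\|_{L_q(\pi_\mathbf{W};L_p(\mu))\to L_q(\tau_\mathbf{W}^n;L_p(\mu))}\lesssim\sqrt{q/((1-\lambda_\mathbf{W})n)}$ that was derived in the paragraph preceding the statement, and then to extract the tail bound \eqref{eq:tail version hilbert} from it in the usual way. For the first point I would unwind the definition \eqref{eq:def TX}: for $f:[N]\to L_p(\mu)$ one has $T_{L_p(\mu)}f(i_1,\ldots,i_n)=\frac1n\sum_{k=1}^n f(i_k)-\E[f(W_1)]$, whence $\|T_{L_p(\mu)}f\|_{L_q(\tau_\mathbf{W}^n;L_p(\mu))}^q=\E\big[\|\frac1n\sum_{k=1}^n f(W_k)-\E[f(W_1)]\|_{L_p(\mu)}^q\big]$ and $\|f\|_{L_q(\pi_\mathbf{W};L_p(\mu))}^q=\E[\|f(W_1)\|_{L_p(\mu)}^q]$, so the operator bound and \eqref{eq:q in rhs Lp power q} say the same thing. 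I would then recall why that operator bound holds: interpolate between the $p=q$ endpoint \eqref{eq:q to interpolate} — which is the $\omega$-integral of the $q$'th power of the pointwise inequality obtained by applying Theorem~\ref{momentbound} to $i\mapsto f(i)(\omega)$ — and the $p=2$ endpoint \eqref{eq:2 to interpolate}, which is Corollary~\ref{cor:hilbert moment} applied inside an isometric copy of a Hilbert space in $L_2(\mu)$. Since $[L_2(\mu),L_q(\mu)]_\theta=L_p(\mu)$ with $p$ sweeping out $[2,q]$, Calder\'on's vector-valued Riesz--Thorin theorem produces the intermediate estimate with constant $\big(q/((1-\lambda_\mathbf{W})n)\big)^{(1-\theta)/2}\big(q/((1-\lambda_\mathbf{W})n)\big)^{\theta/2}=\sqrt{q/((1-\lambda_\mathbf{W})n)}$, independent of $p$ and $\theta$.

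For the tail bound I would set $S\eqdef\|\frac1n\sum_{k=1}^n f(W_k)-\E[f(W_1)]\|_{L_p(\mu)}$ and $M\eqdef\max_{j\in[N]}\|f(j)\|_{L_p(\mu)}$, note that $\E[\|f(W_1)\|_{L_p(\mu)}^q]\le M^q$, fix a constant $C$ so that \eqref{eq:q in rhs Lp power q} gives $(\E[S^q])^{1/q}\le C\sqrt{q/((1-\lambda_\mathbf{W})n)}\,M$ for every $q\ge p$, and apply Markov's inequality to obtain
\[
\Pr[S\ge aM]\le\frac{\E[S^q]}{(aM)^q}\le\bigg(\frac{C}{a}\sqrt{\frac{q}{(1-\lambda_\mathbf{W})n}}\bigg)^{\!q}.
\]
Then I would optimize over $q\ge p$ in two regimes. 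If $a^2(1-\lambda_\mathbf{W})n\ge e^2C^2p$, take $q\eqdef a^2(1-\lambda_\mathbf{W})n/(e^2C^2)\ (\ge p)$, making the parenthesized quantity equal to $1/e$ and the bound equal to $e^{-q}=e^{-a^2(1-\lambda_\mathbf{W})n/(e^2C^2)}\le e^{\,p-a^2(1-\lambda_\mathbf{W})n/(e^2C^2)}$. In the complementary regime $a^2(1-\lambda_\mathbf{W})n<e^2C^2p$ I would just use $\Pr[S\ge aM]\le 1$, which is at most $e^{\,p-a^2(1-\lambda_\mathbf{W})n/(e^2C^2)}$ precisely because the exponent is positive there. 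With $c\eqdef 1/(e^2C^2)$ the two cases combine into \eqref{eq:tail version hilbert}; it is exactly the $e^p$ factor that lets these two estimates be fused, and it is also exactly what absorbs the constraint $q\ge p$.

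I expect no serious obstacle here: all the analytic content is already carried by Theorem~\ref{momentbound} and Corollary~\ref{cor:hilbert moment}. The one step that requires genuine care is the complex-interpolation step — one must interpolate only the ``inner'' space $L_p(\mu)$ while keeping the ``outer'' integrability exponent pinned at $q$ in both endpoints, since otherwise the operator-norm constant would acquire a dependence on $p$ and the bound would no longer match the right-hand side of \eqref{eq:q in rhs Lp power q}. The remaining work — unwinding the Bochner-space definitions and the two-regime Markov-and-optimize argument — is routine.
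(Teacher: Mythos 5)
Your proposal is correct and follows the paper's own route exactly: the moment bound \eqref{eq:q in rhs Lp power q} is obtained by Calder\'on's vector-valued Riesz--Thorin interpolation between the endpoint operator bounds \eqref{eq:q to interpolate} and \eqref{eq:2 to interpolate} with the outer exponent held at $q$, and the tail bound \eqref{eq:tail version hilbert} follows by the Markov-plus-optimization argument over $q\ge p$ that the paper leaves as ``usual.'' Your two-regime treatment of the constraint $q\ge p$, with the $e^p$ factor absorbing the small-$a$ case, is precisely the intended (omitted) detail and is carried out correctly.
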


\begin{remark}\label{rem:q<2} {\em By convexity we have $\|T_\R\|_{L_1(\pi_\mathbf{W})\to L_1(\tau_\mathbf{W}^n)}\le 2$, since it is evident from~\eqref{eq:def TX} that the operator in question is the difference of two averaging operators. By interpolating this (trivial) estimate with the case $q=2$ of Theorem~\ref{momentbound} using the (scalar-valued) Riesz--Thorin interpolation theorem as above, we arrive at the following variant of Theorem~\ref{momentbound} in the range $1\le q\le 2$, which holds under the same assumptions.
\begin{equation}\label{eq:q<2}
\Bigg(\E\bigg[\Big|\frac{f(W_1)+\cdots+f(W_n)}{n}-\E[f(W_1)]\Big|^q\bigg]\Bigg)^{\frac{1}{q}}\lesssim \bigg(\frac{1}{(1-\lambda_\mathbf{W})n}\bigg)^{1-\frac{1}{q}}\cdot \Big(\E
\big[|f(W_1)|^q\big]\Big)^{\frac{1}{q}}.
\end{equation}

Observe that when the Markov chain $\mathbf{W}$ is reversible,  the case $q=2$ of~\eqref{eq:q in rhs1} is a quadratic inequality that could be directly verified in a straightforward manner by expanding both sides  in an orhtonormal eigenbasis of the transition matrix of $\mathbf{W}$. The more substantial content of Theorem~\ref{momentbound}  is therefore the case $q>2$, which does not lend itself to such linear-algebraic reasoning.}
\end{remark}

\begin{remark}\label{rem:optimal}{\em Both~\eqref{eq:q in rhs1} and~\eqref{eq:q<2} are sharp (up to the implicit universal constant factors) for large enough $n\in \N$. This is seen by examining the following family of Markov chains. For every $\e,\lambda\in (0,1)$ consider  the two-state Markov chain $\mathbf{W}(\lambda,\e)$ whose transition matrix equals
\begin{equation}\label{eq:2 by 2}
\begin{pmatrix} 1-(1-\lambda)(1-\e)& (1-\lambda)(1-\e)\\
              (1-\lambda)\e & 1-(1-\lambda)\e
                       \end{pmatrix}=\lambda {I}_2+(1-\lambda)E_{\pi(\e)}\in  \M_2(\R), \end{equation}
where $I_2$ is the $2$-by-$2$ identity matrix and $\pi(\e)=(\e,1-\e)\in \triangle^{\!1}$. Then $\pi_{\mathbf{W}(\lambda,\e)}=\pi(\e)$ and $\lambda_{\mathbf{W}(\lambda,\e)}=\lambda$. 

The optimality of~\eqref{eq:q in rhs1} is exhibited by taking $\e=\frac12$ and $f:\{1,2\}\to \R$ that is given by $f(1)=1=-f(2)$. In this case, it is elementary to check that if $n\ge q/(1-\lambda)$, then both sides of~\eqref{eq:q in rhs1} are within universal constant multiples of each other. Next, the optimality of~\eqref{eq:q<2} is exhibited by considering $f:\{1,2\}\to \R$ that is given by $f(1)=1$ and $f(2)=0$.  In this case, it is elementary to check that if $n\ge 1/(1-\lambda)$, then for small enough $\e>0$ both sides of~\eqref{eq:q<2} are within universal constant multiples of each other. The routine computations that verify these assertions are omitted.}
\end{remark}

\begin{remark} {\em The above discussion raises the question of understanding what is required from a Banach space $(X,\|\cdot\|_X)$ so that the ``Gillman phenomenon'' for stationary Markov chains (or variants thereof)   would hold for $X$-valued mappings. The present work obtains the first examples (notably, Hilbert space) of such theorems in infinite dimensions (equivalently, dimension-independent bounds). However, much more remains to be understood here. This matter is pursued in the forthcoming work~\cite{Nao19}, where it is explained how it relates to central themes in Banach space theory. Further infinite dimensional statements are derived in~\cite{Nao19}, including a treatment of~\eqref{eq:q in rhs Lp power q} in the range $2\le q<p$ which is not covered in Corollary~\ref{cor:q>p},  through an approach that is entirely different from our reasoning here.
}
\end{remark}

We end the Introduction by noting that the above results have an equivalent dual formulation that is worthwhile to work out explicitly. Given a Banach space $(X,\|\cdot\|_X)$, the operator $T_X$ that is given in~\eqref{eq:def TX} has norm $K>0$ from $L_q(\pi_\mathbf{W};X)$ to $L_q(\tau^n_\mathbf{W};X)$ if and only if its adjoint $T^*_X$ has norm $K$ from $L_{q^*}(\tau^n_\mathbf{W};X^*)$ to $L_{q^*}(\pi_\mathbf{W};X^*)$, where $q^*=q/(q-1)$. This leads to the following dual formulation of Corollary~\ref{cor:q>p}, whose derivation is  a mechanical unravelling of the definitions (the straightforward details are omitted).

\begin{corollary}[adjoint of~\eqref{eq:q in rhs Lp power q}] Let $\mathbf{W}=\{W_t\}_{t=1}^{\infty}$ be a  stationary Markov chain whose state space is $[N]$ and  with $\lambda_\mathbf{W}<1$. Fix $n\in \N$ and $p,q\in (1,2]$ with $q\le p$. For every measure space $(\Omega,\mu)$ and   $F:[N]^n\to L_p(\mu)$,
\begin{align*}
\Bigg(\E\bigg[\Big\|\frac{1}{n}\sum_{i=1}^n \E \Big[F(W_1,\ldots,W_n)\Big| W_i\Big]&-\E [F(W_1,\ldots,W_n)]\Big\|_{L_p(\mu)}^q\bigg]\Bigg)^{\frac{1}{q}}\\&\lesssim \frac{1}{\sqrt{(q-1)(1-\lambda_\mathbf{W})n}} \cdot\Big(\E\big[\|F(W_1,\ldots,W_n)\|_{L_p(\mu)}^q\big]\Big)^{\frac{1}{q}}.
\end{align*}
\end{corollary}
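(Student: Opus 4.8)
The plan is to deduce this corollary as the Banach-space adjoint of the operator bound~\eqref{eq:q in rhs Lp power q} of Corollary~\ref{cor:q>p}; no idea beyond that corollary (hence, ultimately, Theorem~\ref{momentbound}) enters, and the whole argument is a routine dualization. Recall first that for any reflexive Banach space $(X,\|\cdot\|_X)$ and any finitely supported probability measure $\sigma$ on a finite set $S$ there is an isometric identification $\big(L_r(\sigma;X)\big)^{*}=L_{r^{*}}(\sigma;X^{*})$ for $1<r<\infty$ (with $r^{*}=r/(r-1)$), under the bilinear pairing $\langle\psi,\phi\rangle=\sum_{s\in S}\sigma(s)\langle\psi(s),\phi(s)\rangle_{X,X^{*}}$; since all the measures in play are finitely supported, no measurability issues arise, and any linear operator $U$ between two such spaces satisfies $\|U\|=\|U^{*}\|$, where $U^{*}$ is the adjoint with respect to these pairings.

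Next I would apply this to the operator $T_{L_P(\mu)}$ of~\eqref{eq:def TX} with exponents in the range covered by Corollary~\ref{cor:q>p}: for $2\le P\le Q$ that corollary gives $\big\|T_{L_P(\mu)}\big\|_{L_Q(\pi_\mathbf{W};L_P(\mu))\to L_Q(\tau_\mathbf{W}^{n};L_P(\mu))}\lesssim\sqrt{Q/((1-\lambda_\mathbf{W})n)}$. Taking adjoints and using $\big(L_P(\mu)\big)^{*}=L_{P^{*}}(\mu)$, the operator $T^{*}:=\big(T_{L_P(\mu)}\big)^{*}\colon L_{Q^{*}}(\tau_\mathbf{W}^{n};L_{P^{*}}(\mu))\to L_{Q^{*}}(\pi_\mathbf{W};L_{P^{*}}(\mu))$ obeys the same bound. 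Now rename $q:=Q^{*}$ and $p:=P^{*}$. As $x\mapsto x/(x-1)$ is decreasing on $(1,\infty)$ and $P\ge2$, the hypothesis $Q\ge P\ge2$ becomes $1<q\le p\le2$, exactly the range of the statement; and since $q\le2$ we have $Q=q/(q-1)\le 2/(q-1)$, so $\sqrt{Q/((1-\lambda_\mathbf{W})n)}\le\sqrt{2}\big/\sqrt{(q-1)(1-\lambda_\mathbf{W})n}$, which is the asserted constant.

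It remains to unravel $T^{*}$. Writing $T_X=\tfrac1n\sum_{k=1}^{n}P_k-E$, where $P_kf(i_1,\ldots,i_n)=f(i_k)$ and $E$ sends $f$ to the constant function $\sum_{j}\pi_\mathbf{W}(j)f(j)$, a one-line computation that groups the sum over trajectories according to the value of the $k$-th coordinate (and uses $\Pr[W_k=i]=\pi_\mathbf{W}(i)$, by stationarity) shows $P_k^{*}F(i)=\E[F(W_1,\ldots,W_n)\mid W_k=i]$ and $E^{*}F\equiv\E[F(W_1,\ldots,W_n)]$, so that
\[
T^{*}F(i)=\frac{1}{n}\sum_{k=1}^{n}\E\big[F(W_1,\ldots,W_n)\,\big|\,W_k=i\big]-\E\big[F(W_1,\ldots,W_n)\big].
\]
Substituting this into $\|T^{*}F\|_{L_q(\pi_\mathbf{W};L_p(\mu))}^{q}=\sum_{i=1}^{N}\pi_\mathbf{W}(i)\|T^{*}F(i)\|_{L_p(\mu)}^{q}$ yields the $q$-th power of the left-hand side of the claimed inequality, while $\|F\|_{L_q(\tau_\mathbf{W}^{n};L_p(\mu))}^{q}=\E\big[\|F(W_1,\ldots,W_n)\|_{L_p(\mu)}^{q}\big]$ yields the $q$-th power of the right-hand factor; combined with the operator bound of the previous paragraph this is the assertion.

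There is no real obstacle here. The only points demanding attention are the bookkeeping just carried out: correctly computing $T^{*}$ — in particular that the coordinate average $\tfrac1n\sum_kP_k$ dualizes coordinate-by-coordinate into the average of conditional expectations $\tfrac1n\sum_k\E[\,\cdot\mid W_k=i]$, which is the source of the $\E[F\mid W_i]$ terms — and the exponent translation from the hypotheses of Corollary~\ref{cor:q>p} ($Q\ge P\ge2$, bound $\sqrt{Q/((1-\lambda_\mathbf{W})n)}$) to the dual hypotheses ($q\le p$, $q,p\in(1,2]$, bound $1/\sqrt{(q-1)(1-\lambda_\mathbf{W})n}$). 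As a sanity check on the constant, note that Jensen's inequality for conditional expectation already gives the crude bound in which $1/\sqrt{(q-1)(1-\lambda_\mathbf{W})n}$ is replaced by an absolute constant, so all the content of the corollary is the spectral-gap factor inherited from Corollary~\ref{cor:q>p}.
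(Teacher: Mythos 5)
Your proposal is correct and is precisely the argument the paper has in mind: the paper derives this corollary by noting $\|T_X\|=\|T_X^*\|$ and declaring the rest "a mechanical unravelling of the definitions," and you have supplied exactly that unravelling (the duality $(L_Q(\sigma;X))^*=L_{Q^*}(\sigma;X^*)$, the explicit computation $T^*F(i)=\frac1n\sum_k\E[F\mid W_k=i]-\E[F]$, and the exponent translation $Q=q/(q-1)\le 2/(q-1)$). The only point worth noting is that your computation shows the left-hand side should be read as the $L_q(\pi_\mathbf{W})$-norm of the single-variable function $i\mapsto\frac1n\sum_k\E[F\mid W_k=i]$, which is the correct reading of the paper's (slightly ambiguous) notation $\E[\|\frac1n\sum_i\E[F\mid W_i]-\E[F]\|^q]$.
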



\section{Proof of Theorem~\ref{momentbound}}\label{sec:proof}

Suppose from now on that we are in the setting of Theorem~\ref{momentbound}. We will write for simplicity $\lambda=\lambda_\mathbf{W}<1$ and $\pi=\pi_\mathbf{W}\in \Nsimplex$. We will also let $A=(a_{ij})\in \M_N(\R)$ be the transition matrix of $\mathbf{W}$.

It suffices to prove~\eqref{eq:q in rhs1} when $f:[N]\to \R$ satisfies $\E[f(W_1)]=0$. Indeed, this could be then applied to the centered function $f-\E[f(W_1)]$ to yield the estimate
\begin{align}\label{eq:center}
\begin{split}
\Bigg(\E\bigg[\Big|\frac{f(W_1)+\cdots+f(W_n)}{n}-\E[f(W_1)]\Big|^q\bigg]\Bigg)^{\frac{1}{q}}&\lesssim \sqrt{\frac{q}{(1-\lambda_\mathbf{W})n}}\cdot \Big(\E
\big[|f(W_1)-\E[f(W_1)]|^q\big]\Big)^{\frac{1}{q}}\\&\le 2\sqrt{\frac{q}{(1-\lambda_\mathbf{W})n}}\cdot \Big(\E
\big[|f(W_1)|^q\big]\Big)^{\frac{1}{q}},
\end{split}
\end{align}
where the last step is the triangle inequality in $L_q(\pi)$. So, assume from now on that $\E[f(W_1)]=0$. It will be convenient to define $u\in \R^N$ by setting $u_i=f(i)$ for all $i\in [N]$. The assumption on $f$ becomes $\sum_{i=1}^N\pi_iu_i=0$. Below,  we will denote   the diagonal matrix whose diagonal is $u$ by $U\in \M_N(\R)$, i.e.,
$$U\eqdef
\begin{pmatrix} u_1 & 0 & \dots&0 \\
  0 & u_2& \ddots & \vdots\\
            \vdots & \ddots & \ddots &0\\
              0 & \dots  &0&u_N
                       \end{pmatrix}\eqdef\begin{pmatrix} f(1) & 0 & \dots&0 \\
  0 & f(2)& \ddots & \vdots\\
            \vdots & \ddots & \ddots &0\\
              0 & \dots  &0&f(N)
                       \end{pmatrix}.$$

\begin{lemma}\label{lem:increasing} For every $m\in \N$ we have
$$
\E \Big[\big(f(W_1)+\cdots+f(W_n)\big)^{2m}\Big]\le (2m)!\sum_{\substack{v_0,\ldots,v_{2m-1} \in \N\cup \{0\}\\ v_0+\cdots+v_{2m-1} \le n-1}}
\big\|U A^{v_1}UA^{v_2}\cdots UA^{v_{2m-1}}u \big\|_{L_1(\pi)}.
$$
\end{lemma}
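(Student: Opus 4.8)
The plan is to expand the $2m$-th power of the sum $f(W_1)+\cdots+f(W_n)$ into a sum of expectations of products $f(W_{t_1})\cdots f(W_{t_{2m}})$ over all tuples $(t_1,\dots,t_{2m})\in [n]^{2m}$, and then to organize these contributions by the order statistics of the indices. First I would write
\[
\E\Big[\big(f(W_1)+\cdots+f(W_n)\big)^{2m}\Big]=\sum_{t_1,\dots,t_{2m}=1}^n \E\big[f(W_{t_1})\cdots f(W_{t_{2m}})\big],
\]
and observe that by symmetry of the multilinear form in the $2m$ arguments, the right-hand side is at most $(2m)!$ times the sum over \emph{ordered} tuples $1\le t_1\le t_2\le \cdots\le t_{2m}\le n$. (One must check that the products are nonnegative after grouping, or more simply bound each term by its absolute value; this is where the $L_1(\pi)$ norm, rather than a signed expectation, enters on the right-hand side.)

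Next I would translate a single ordered term into matrix language. Using stationarity, the joint law of $(W_{t_1},\dots,W_{t_{2m}})$ with gaps $v_j\eqdef t_{j+1}-t_j\ge 0$ is governed by the transition matrix $A$, and $\E[f(W_{t_1})\cdots f(W_{t_{2m}})]$ equals $\pi$ applied to $U A^{v_1} U A^{v_2}\cdots U A^{v_{2m-1}} U \mathbf 1$, i.e. (since $U\mathbf 1 = u$) the quantity $\langle \pi, U A^{v_1}\cdots U A^{v_{2m-1}} u\rangle$ up to the innermost $U$. Bounding this inner product by $\|U A^{v_1}\cdots U A^{v_{2m-1}} u\|_{L_1(\pi)}$ (the total variation pairing with the probability vector $\pi$) gives exactly the summand in the statement. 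The reindexing from ordered tuples $1\le t_1\le\cdots\le t_{2m}\le n$ to gap vectors $(v_1,\dots,v_{2m-1})$ together with a leading offset $v_0=t_1-1\ge 0$ yields precisely the constraint $v_0+v_1+\cdots+v_{2m-1}\le n-1$, since $t_{2m}=1+v_0+v_1+\cdots+v_{2m-1}\le n$. Note $v_0$ does not appear inside the matrix product because the product starts at $W_{t_1}$; it only controls how many choices of the starting time there are, and each such choice contributes the same bound, so summing over $v_0$ is what produces the full set of gap vectors with $v_0$ free.

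The main obstacle is the sign issue: the expansion produces signed terms, and a naive application of the multinomial theorem does not directly give a nonnegative upper bound, so one must be careful to pass to absolute values at the right moment — namely, first restrict to ordered tuples at the cost of the combinatorial factor $(2m)!$, and only then bound $|\E[\cdots]|\le \|\cdots\|_{L_1(\pi)}$, rather than trying to keep track of cancellations. A secondary technical point is making the correspondence between ordered index tuples and $(v_0,\dots,v_{2m-1})$ with $\sum v_j\le n-1$ a genuine bijection (or at least an injection in the direction that matters), so that the count of ordered tuples is not overstated; since consecutive indices may coincide, the gaps $v_j$ are allowed to be $0$, which is why the sum ranges over $\N\cup\{0\}$. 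Everything else is bookkeeping.
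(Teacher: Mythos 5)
Your proposal is correct and follows essentially the same route as the paper's proof: expand the $(2m)$-th power, pass to ordered (sorted) index tuples at the cost of a $(2m)!$ factor, convert each ordered expectation to a matrix product $\langle \pi, UA^{v_1}\cdots UA^{v_{2m-1}}u\rangle$ via stationarity and the Markov property, bound that by the $L_1(\pi)$ norm, and reindex to gap vectors $(v_0,\ldots,v_{2m-1})$ with $v_0+\cdots+v_{2m-1}\le n-1$. The sign-handling and the role of the free offset $v_0$ that you flag as subtleties are dealt with in the paper exactly as you describe.
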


\begin{proof} Let $V_{2m}$ be the set of all those vectors in $w \in [n]^{2m}$ that satisfy $1 \le w_1 \leq w_2 \leq \cdots \leq w_{2m} \le n$. Observe that by the Markov property and stationarity, for  every $w\in V_{2m}$ we have the following identity.
\begin{align*}\label{eq:commutative identity}
\begin{split}
\E\left[\prod_{i=1}^{2m} f(W_{w_i})\right]&=\sum_{j\in [N]^{2m}} \pi_{j_1}A^{w_2-w_1}_{j_1j_2}A^{w_3-w_2}_{j_2j_3}\cdots A^{w_{2m}-w_{2m-1}}_{i_{2m-1}i_{2m}}\prod_{k=1}^{2m} u_{j_k}\\&=\sum_{j\in [N]^{2m}} \pi_{j_1} (UA^{w_2-w_1})_{j_1j_2}(UA^{w_3-w_2})_{j_2j_3}\cdots (UA^{w_{2m}-w_{2m-1}})_{j_{2m-1}j_{2m}}u_{j_{2m}}\\
&=\sum_{i\in [N]} \pi_i (UA^{w_2-w_1}UA^{w_3-w_2}\cdots UA^{w_{2m}-w_{2m-1}}u)_i.
\end{split}
\end{align*}
So, by expanding the $(2m)$'th power of $f(W_1)+\cdots+f(W_n)$ and arranging the indices in increasing order,
\begin{align*}
\E \Big[\big(f(W_1)+\cdots+f(W_n)\big)^{2m}\Big]
&\le(2m)!\sum_{w\in V_{2m}}\Bigg| \E\left[\prod_{i=1}^{2m} f(W_{w_i})\right] \Bigg|\\
&\le (2m)! \sum_{w\in V_{2m}}\|UA^{w_2-w_1}UA^{w_3-w_2}\cdots UA^{w_{2m}-w_{2m-1}}u\|_{L_1(\pi)}.\qedhere
\end{align*}
\end{proof}

\begin{remark} {\em It is worthwhile to note in passing that while the proof of Lemma~\ref{lem:increasing}  relies on what may seem to be innocuous identities, the crucial step that rearranged the factors so that their indices are  increasing is inherently commutative, and this is what obstructs the direct use of the ensuing proof for matrix-valued functions, namely the setting of~\cite{WX08,GLSS17}; alternative routes are taken in~\cite{GLSS17,Nao19} but it would be interesting to investigate if a more careful reasoning along the lines of the present work could be used to treat the setting of functions that take values in Schatten--von Neuman trace classes.}
\end{remark}

Towards  bounding from above each of the terms $\| U A^{v_1}UA^{v_2}\cdots UA^{v_{2m-1}}u \|_{L_1(\pi)}$ from Lemma~\ref{lem:increasing}, we record the following iterative application of H\"older's inequality and the definition of operator norms.

\begin{lemma}\label{lem:alternate} Fix $k\in \N$ and $q\ge k+1$. Then, for every  $T_1,\ldots,T_{k}\in \M_N(\R)$ we have
\begin{equation*}\label{eq:2m-1}
\left\|UT_1UT_2\cdots UT_{k}u\right\|_{L_1(\pi)}
\leq
\|u\|_{L_{q}(\pi)}^{k+1} \prod_{j=1}^{k} \|T_j\|_{L_{\frac{2q}{q+k+1-2j}}(\pi) \rightarrow L_{\frac{2q}{q+k+1-2j}}(\pi)}.
\end{equation*}
\end{lemma}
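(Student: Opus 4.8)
The plan is to peel off one factor of $U$ at a time from the left, alternating between two moves: (i) a H\"older step that extracts the contribution of the diagonal matrix $U$ (which acts by pointwise multiplication by $u$) at the cost of a factor $\|u\|_{L_r(\pi)}$ for an appropriate exponent $r$, and (ii) an operator-norm step that replaces $T_j$ applied to a vector by the norm of that vector in the relevant $L_s(\pi)$ space. The bookkeeping device is to track, after $j$ peeling steps, the $L_s(\pi)$-norm of the remaining vector $UT_{j+1}\cdots UT_k u$; the exponents $s$ will march from $1$ (the norm we start with) up to $q$ (the exponent in which we finally measure the ``leftover'' powers of $u$), and the particular arithmetic progression $s_j = 2q/(q+k+1-2j)$ for the operator norm of $T_j$ is exactly what makes the H\"older exponents close up.

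Concretely, I would set up the induction on $k$. For the base case $k=0$ the claimed inequality reads $\|Uu\|_{L_1(\pi)} = \|u^2\|_{L_1(\pi)} = \|u\|_{L_2(\pi)}^2 \le \|u\|_{L_q(\pi)}^2$ when $q \ge 2$... wait, more precisely for general $k$ I would argue directly: write $\|UT_1 U T_2\cdots UT_k u\|_{L_1(\pi)} = \|u\cdot(T_1 U T_2 \cdots U T_k u)\|_{L_1(\pi)}$ and apply H\"older with exponents $(q, q/(q-1))$ to split off $\|u\|_{L_q(\pi)}$, leaving $\|T_1(UT_2\cdots UT_k u)\|_{L_{q/(q-1)}(\pi)}$. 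Then bound this by $\|T_1\|_{L_{s_1}\to L_{s_1}}\|UT_2\cdots UT_k u\|_{L_{s_1}(\pi)}$, provided $s_1 = q/(q-1)$ — and indeed $2q/(q+k-1) = q/(q-1)$ precisely when $j=1$ and we track that exponent. Repeating: at the $j$-th stage we hold a vector in $L_{s}(\pi)$, peel off a $U$ by H\"older with exponents chosen so that one factor is $\|u\|_{L_q(\pi)}$ again (this forces the next exponent), and then peel off $T_j$ via its operator norm on the resulting space $L_{s_j}(\pi)$. After all $k$ factors $T_j$ are removed we are left with $\|U^{\,?}u\|$... more carefully, after removing $T_k$ we are left with a pure power of $u$, whose $L_1(\pi)$ norm is $\|u\|_{L_q(\pi)}$ to the remaining power; one checks the powers of $\|u\|_{L_q(\pi)}$ total $k+1$.

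The only real point requiring care — and the step I expect to be the main obstacle — is verifying that the exponents genuinely telescope: that the H\"older exponent forced at each stage by insisting on pulling out exactly $\|u\|_{L_q(\pi)}$ coincides with $L_{s_j}$ for $s_j = 2q/(q+k+1-2j)$, and that all these exponents lie in $[1,\infty]$ (equivalently $s_j \ge 1$, which is where the hypothesis $q \ge k+1$ enters, since $s_1 = 2q/(q+k-1) \ge 1 \iff q \ge k-1$ and the smallest is $s_k = 2q/(q-k+1) \ge 1 \iff q \ge k-1$; the binding constraint turning out to be $q\ge k+1$ once one insists the conjugate exponents are all admissible as well). I would present this as a clean descending recursion with an explicit invariant ``after $j$ steps the quantity is at most $\|u\|_{L_q(\pi)}^{j} \prod_{i=1}^{j}\|T_i\|_{L_{s_i}\to L_{s_i}} \cdot \|UT_{j+1}\cdots UT_k u\|_{L_{s_j^{*}}(\pi)}$'' for the appropriate conjugate-type exponent, and simply check the index arithmetic closes at $j=k$. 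The rest is a routine, if slightly tedious, computation that I would not spell out in full.
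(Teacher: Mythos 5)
Your overall strategy---iterating H\"older's inequality to peel off the diagonal factors $U$ and operator norms to peel off the $T_j$---is the same as the paper's, but the exponent bookkeeping you commit to does not produce the exponents in the statement, and the one identity you do spot-check is already false. You insist on extracting exactly $\|u\|_{L_q(\pi)}$ at every H\"older step. Starting from $L_1(\pi)$, the first such step leaves the remainder in $L_{q/(q-1)}(\pi)$, and you assert that $q/(q-1)=2q/(q+k-1)$; this is equivalent to $q=k+1$ and fails otherwise (e.g.\ $q=4$, $k=1$ gives $4/3$ versus $2$). More generally, extracting $\|u\|_{L_q(\pi)}$ at each of the $k$ steps forces the running exponent after $j$ steps to satisfy $1/\beta(j)=1-j/q$, so your recursion proves
\begin{equation*}
\left\|UT_1\cdots UT_ku\right\|_{L_1(\pi)}\le \|u\|_{L_q(\pi)}^{k+1}\prod_{j=1}^k\big\|T_j\big\|_{L_{q/(q-j)}(\pi)\to L_{q/(q-j)}(\pi)},
\end{equation*}
which is a different inequality. (It is also unusable downstream: the exponents $2q/(q+k+1-2j)$ are symmetric about $j=(k+1)/2$ and equal $2$ there, which is what allows the factors $1/\beta(s,j)$ in Lemma~\ref{finbnew} to multiply to $e^{O(m)}$; your exponents $q/(q-j)$ sit near $1$ for small $j$, where the interpolated bound~\eqref{eq:interpolated gap} gives essentially no decay.)

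The fix is to vary which norm of $u$ you extract rather than trying to force every extraction to be $\|u\|_{L_q(\pi)}$: take out $\|u\|_{L_{2q/(q-k+1)}(\pi)}$ at the first and the last step, and $\|u\|_{L_q(\pi)}$ at the $k-1$ intermediate ones. Setting $1/\beta(j)=\sum_{i=j+1}^{k+1}1/\alpha(i)$ for this choice of exponents $\alpha(i)$ gives exactly $\beta(j)=2q/(q+k+1-2j)$ together with $\beta(0)=1$, and the hypothesis $q\ge k+1$ is then used only at the very end, to upgrade $\|u\|_{L_{2q/(q-k+1)}(\pi)}\le\|u\|_{L_q(\pi)}$ since $2q/(q-k+1)\le q$. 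In short, the ``routine, if slightly tedious, computation'' you deferred is precisely the step that does not close with the exponents you chose.
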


\begin{proof} Suppose that $\alpha(1),\ldots,\alpha(k+1)\ge 1$ satisfy $\frac{1}{\alpha(1)}+\cdots+\frac{1}{\alpha(k+1)} \le 1$.
We claim that
\begin{equation}\label{eq:alpha beta}
\left\|UT_1UT_2\cdots UT_{k}u\right\|_{L_{\beta(0)}(\pi)}\le \bigg(\prod_{i=1}^{k+1}\|u\|_{L_{\alpha(i)}(\pi)}\bigg)\prod_{j=1}^k\|T_j\|_{L_{\beta(j)}(\pi)\to L_{\beta(j)}(\pi)},
\end{equation}
where $\beta(0),\ldots,\beta(k)\ge 1$ are defined by  $\frac{1}{\beta(j)}=\frac{1}{\alpha(j+1)}+\cdots+\frac{1}{\alpha(k+1)}$.
The proof of~\eqref{eq:alpha beta} is by induction on $k$.

The case $k=0$ is tautological.   For the induction step, since $\frac{1}{\beta(0)}=\frac{1}{\alpha(1)}+\frac{1}{\beta(1)}$, by H\"older's inequality,
\begin{equation}\label{eq:holder first}
\left\|UT_1UT_2\cdots UT_{k}u\right\|_{L_{\beta(0)}(\pi)}\le \|u\|_{L_{\alpha(1)}(\pi)}\left\|T_1UT_2\cdots UT_{k}u\right\|_{L_{\beta(1)}(\pi)}.
\end{equation}
By the definition of the operator norm $\|T_1\|_{L_{\beta(1)}(\pi)\to L_{\beta(1)}(\pi)}$ we have,
\begin{equation}\label{eq:op}
\left\|T_1UT_2\cdots UT_{k}u\right\|_{L_{\beta(1)}(\pi)}\le \|T_1\|_{L_{\beta(1)}(\pi)\to L_{\beta(1)}(\pi)}\left\|UT_2\cdots UT_{k}u\right\|_{L_{\beta(1)}(\pi)}.
\end{equation}
Now~\eqref{eq:alpha beta} follows by   combining~\eqref{eq:holder first} and~\eqref{eq:op} with the inductive hypothesis.

Choose $\alpha(1)=\alpha(k+1)=\frac{2q}{q-k+1}$ and $\alpha(2)=\cdots=\alpha(k)=q$. So,
$$
\forall\, j\in [k],\qquad \beta(j)=\frac{1}{\frac{1}{\alpha(j+1)}+\cdots+\frac{1}{\alpha(k+1)}}=\frac{1}{\frac{k-j}{q}+\frac{q-k+1}{2q}}=\frac{2q}{q+k+1-2j},
$$
and $\beta(0)=1$.
Hence,
with this specific setting of the parameters the bound~\eqref{eq:alpha beta} becomes
$$
\left\|UT_1UT_2\cdots UT_{k}u\right\|_{L_{1}(\pi)}
\le
\|u\|_{L_{\frac{2q}{q-k+1}}(\pi)}^2\|u\|_{L_{q}(\pi)}^{k-1} \prod_{j=1}^k \|T_j\|_{L_{\frac{2q}{q+k+1-2j}}(\pi) \rightarrow L_{\frac{2q}{q+k+1-2j}}(\pi)}.
$$
It remains to note that since $q\ge k+1$ we have $\frac{2q}{q-k+1}\le q$, and therefore $\|u\|_{L_{\frac{2q}{q-k+1}}(\pi)}\le \|u\|_{L_q(\pi)}$.
\end{proof}

Fix $m\in \N$. Throughout what follows, it will be notationally convenient to consider each Boolean vector $s\in \{0, 1\}^{2m-1}$ as an infinite vector in $\{0, 1\}^\Z$  whose entries vanish on $\Z\setminus [2m-1]$, namely we use the convention $s_i=s_j=0$ for $i\le 0$ and $j\ge 2m$. Let $S_{2m-1}\subset \{0, 1\}^{2m-1}$ be all  those Boolean vectors of length $2m-1$ with no two consecutive $0$s, and with $s_{2m-1} = 1$, i.e.,
$$
S_{2m-1}\eqdef \bigcap_{j=1}^{2m-1} \Big\{s\in \{0, 1\}^{2m-1}:\  (s_j,s_{j+1})\neq (0,0)\Big\}.
$$
For each  $j\in [2m-1]$ and  $s \in S_{2m-1}$ that satisfy $s_j=1$, we define a quantity
$p(s,j) \ge 1$ in the following way. Consider the consecutive run of $1$s in $s$ to which
$j$ belongs, and let $i_1(s,j)$ and $i_2(s,j)$ be the first and last indices of this run, respectively. Formally,
\begin{equation}\label{eq:def i1i2}
i_1(s,j) \eqdef \max\big\{i\in \{...,j-2,j-1\}:\ s_i=0\big\}+1\quad\mathrm{and}\quad i_2(s,j)\eqdef \min\big\{i\in \{j+1,j+2,\ldots\}:\ s_i=0\big\}-1.
\end{equation}
With this notation, write
\begin{equation}\label{eq:def psj}
p(s,j)\eqdef \frac{4m}{2m+i_1(s,j)+i_2(s,j)-2j}.
\end{equation}

\begin{lem}\label{lem:holderapplication} For every
$T_1, \ldots, T_{2m-1} \in \M_N(\R)$,
\begin{align}\label{eq:splittingj}
\big\| U(T_1+E_\pi)U(T_2+E_\pi)\cdots U(T_{2m-1}+E_\pi)u\big\|_{{L_1(\pi)}}
\leq
\|u\|_{L_{2m}(\pi)}^{2m} \sum_{s \in S_{2m-1}}\prod_{\substack{j\in [2m-1]\\s_j=1}}\big\|T_j\big\|_{L_{p(s,j)}(\pi) \rightarrow L_{p(s,j)}(\pi) }.
\end{align}
\end{lem}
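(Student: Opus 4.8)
The plan is to multiply out each factor $U(T_j+E_\pi)$ into its two summands, discard the resulting terms that vanish because $f$ is centered, and then estimate each surviving term by reducing it to Lemma~\ref{lem:alternate}.

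\emph{Step 1: expansion and vanishing terms.} Expanding the product gives
\[
U(T_1+E_\pi)\cdots U(T_{2m-1}+E_\pi)u=\sum_{s\in\{0,1\}^{2m-1}}UM_1^s\,UM_2^s\cdots UM_{2m-1}^s\,u,
\]
where $M_j^s\eqdef T_j$ if $s_j=1$ and $M_j^s\eqdef E_\pi$ if $s_j=0$. For any $w\in\R^N$ we have $E_\pi w=(\sum_k\pi_kw_k)\mathds 1$, hence $UE_\pi w=(\sum_k\pi_kw_k)u$ and $E_\pi UE_\pi w=(\sum_k\pi_kw_k)(\sum_k\pi_ku_k)\mathds 1=0$, the last equality using the centering hypothesis $\sum_k\pi_ku_k=0$. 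Consequently, any $s$ with two consecutive zeros contains the block $UE_\pi UE_\pi=U(E_\pi UE_\pi)=0$, and any $s$ with $s_{2m-1}=0$ has rightmost factor $UE_\pi u=0$; all such terms vanish. So the sum ranges only over $s\in S_{2m-1}$, and by the triangle inequality in $L_1(\pi)$ it suffices to show, for each fixed $s\in S_{2m-1}$,
\[
\big\|UM_1^s\cdots UM_{2m-1}^su\big\|_{L_1(\pi)}\le\|u\|_{L_{2m}(\pi)}^{2m}\prod_{\substack{j\in[2m-1]\\s_j=1}}\big\|T_j\big\|_{L_{p(s,j)}(\pi)\to L_{p(s,j)}(\pi)}.
\]

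\emph{Step 2: collapsing the product at the zeros.} Fix $s\in S_{2m-1}$ with zeros at $c_1<\cdots<c_r$ (all at most $2m-2$). Its runs of $1$'s are $R_0\eqdef[1,c_1-1]$ (empty iff $c_1=1$), $R_\ell\eqdef[c_\ell+1,c_{\ell+1}-1]$ for $1\le\ell\le r-1$, and $R_r\eqdef[c_r+1,2m-1]$. For a nonempty run $R_\ell=[a_\ell,b_\ell]$ put $w_\ell\eqdef UT_{a_\ell}UT_{a_\ell+1}\cdots UT_{b_\ell}u$, and put $w_0\eqdef u$ if $R_0=\emptyset$. Applying the rank-one identity $UE_\pi w=(\sum_k\pi_kw_k)u$ successively at $c_r,c_{r-1},\ldots,c_1$ collapses the product to
\[
UM_1^s\cdots UM_{2m-1}^su=\Big(\prod_{\ell=1}^r\langle\pi,w_\ell\rangle\Big)\,w_0,
\]
where $\langle\pi,w\rangle\eqdef\sum_k\pi_kw_k$. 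Taking $L_1(\pi)$-norms and using $|\langle\pi,w\rangle|\le\|w\|_{L_1(\pi)}$ yields $\|UM_1^s\cdots UM_{2m-1}^su\|_{L_1(\pi)}\le\prod_{\ell=0}^r\|w_\ell\|_{L_1(\pi)}$.

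\emph{Step 3: estimating each block and assembling.} For a nonempty run $R_\ell=[a,b]$ with $k\eqdef b-a+1\le 2m-1$, apply Lemma~\ref{lem:alternate} with $q=2m$ (which satisfies $q\ge k+1$) to $w_\ell=UT_aUT_{a+1}\cdots UT_bu$. After the reindexing $j=a+j'-1$, the $L_p$-exponent assigned to $T_j$ is $\tfrac{2q}{q+k+1-2(j-a+1)}=\tfrac{4m}{2m+a+b-2j}$, which equals $p(s,j)$ from~\eqref{eq:def psj}, since $i_1(s,j)=a$ and $i_2(s,j)=b$ for every $j\in R_\ell$ (using~\eqref{eq:def i1i2} and the conventions $s_0=s_{2m}=0$); the prefactor is $\|u\|_{L_{2m}(\pi)}^{k+1}$. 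For $R_0=\emptyset$ use the trivial bound $\|w_0\|_{L_1(\pi)}=\|u\|_{L_1(\pi)}\le\|u\|_{L_{2m}(\pi)}$, which has the same shape with $k=0$. Multiplying over $\ell=0,\ldots,r$: the exponents of $\|u\|_{L_{2m}(\pi)}$ sum to $\sum_\ell(|R_\ell|+1)=(2m-1-r)+(r+1)=2m$, and the operator norms combine into $\prod_{j:s_j=1}\|T_j\|_{L_{p(s,j)}\to L_{p(s,j)}}$ because $\bigcup_\ell R_\ell=\{j:s_j=1\}$. This is the per-$s$ bound from Step 1, and summing over $s\in S_{2m-1}$ gives~\eqref{eq:splittingj}.

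The main obstacle is the bookkeeping in Steps 2 and 3: establishing the ``collapse'' identity with the correct placement of the $\langle\pi,\cdot\rangle$ factors and the correct handling of the boundary blocks $R_0$ and $R_r$, and then checking run-by-run that the exponents produced by Lemma~\ref{lem:alternate} coincide with the definition~\eqref{eq:def psj} of $p(s,j)$. These are elementary index computations, but they encode the precise numerology of~\eqref{eq:splittingj}.
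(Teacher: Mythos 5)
Your proposal is correct and follows essentially the same route as the paper's proof: expand over $s\in\{0,1\}^{2m-1}$, discard the terms outside $S_{2m-1}$ using the centering of $u$, collapse the surviving product at the zeros via the rank-one identity $UE_\pi w=\langle\pi,w\rangle u$, and apply Lemma~\ref{lem:alternate} with $q=2m$ to each run of $1$s, verifying that the resulting exponents coincide with $p(s,j)$ and that the powers of $\|u\|_{L_{2m}(\pi)}$ total $2m$. The only differences from the paper are notational (your runs $R_0,\ldots,R_r$ versus the paper's blocks $R_1,\ldots,R_{\ell+1}$), and your index bookkeeping is accurate.
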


\begin{proof}
For each $j\in [2m-1]$, write $T_{j, 0} = E_{\pi}$ and $T_{j, 1} = T_j$. Observe that
\begin{equation}\label{eq:off Sk}
\forall\, s\in \{0,1\}^{2m-1}\setminus S_{2m-1},\qquad UT_{1,s_1}UT_{2,s_2}\cdots UT_{2m-1,s_{2m-1}}u=0.
\end{equation}
Indeed, if $s\in \{0,1\}^{2m-1}\setminus S_{2m-1}$, then  either $s_{2m-1}=0$, in which case $T_{2m-1,s_{2m-1}}u=E_\pi u=\mathbf{0}\in \R^N$, or $s_j=s_{j+1}=0$ for some $j\in [2m-2]$, in which case $T_{j,s_j}UT_{j+1,s_{j+1}}=E_\pi UE_\pi=\mathbf{0}\in \M_N(\R)$, where both identities  are equivalent to the assumption $\sum_{i=1}^N\pi_iu_i=0$. Now,
\begin{align}
\begin{split}
\big\| U(T_1+E_\pi)U(T_2+E_\pi)\cdots U(T_{2m-1}+E_\pi)u\big\|_{{L_1(\pi)}}
&=
\Big\|\sum_{s \in \{0, 1\}^{2m-1}} UT_{1,s_1}UT_{2,s_2}\cdots UT_{2m-1,s_{2m-1}}u\Big\|_{L_1(\pi)} \\
&\le \sum_{s \in \{0, 1\}^{2m-1}} \big\|UT_{1,s_1}UT_{2,s_2}\cdots UT_{2m-1,s_{2m-1}}u\big\|_{L_1(\pi)} \\
&\!\!\!\stackrel{\eqref{eq:off Sk}}{=}
\sum_{s \in S_{2m-1}} \big\|UT_{1,s_1}UT_{2,s_2}\cdots UT_{2m-1,s_{2m-1}}u \big\|_{L_1(\pi)}.
\end{split} \label{eq:mainclaiminmono}
\end{align}

Fix $s \in S_{2m-1}$ and let $1 \le r_1 < r_2 < \cdots < r_\ell < 2m-1$ be all of the indices at which $s$ vanishes. Define $R_1,\ldots,R_{\ell+1}\in \M_N(\R)$ by setting
$$R_1\eqdef (UT_{1})(U T_{2})\cdots (UT_{r_1-1}),\qquad R_{\ell+1}\eqdef (UT_{r_{\ell}+1})(U T_{r_{\ell}+2})\cdots (UT_{2m-1}),$$ and
$$R_\kappa\eqdef (UT_{r_{\kappa-1}+1})(U T_{r_{\kappa-1}+2})\cdots (UT_{r_{\kappa}-1})$$ for $\kappa\in \{2,\ldots,\ell\}$. Using the fact that $UE_\pi v=\left(\sum_{i=1}^N \pi_iv_i\right)u$ for every $v\in \R^N$,  we have the following identity.
$$
UT_{1,s_1}UT_{2,s_2}\cdots UT_{2m-1,s_{2m-1}}u=R_1(UE_\pi) R_2(UE_\pi) R_3\cdots (UE_\pi) R_{\ell+1}u =\Bigg(\prod_{\kappa=2}^{\ell+1}\sum_{i=1}^N\pi_i(R_{\kappa}u)_i \Bigg)R_1u.
$$
Consequently,
\begin{equation}\label{eq:Rj product}
\big\|UT_{1,s_1}UT_{2,s_2}\cdots UT_{2m-1,s_{2m-1}}u \big\|_{L_1(\pi)}= \|R_1u\|_{L_1(\pi)}\prod_{\kappa=2}^{\ell+1}\Big|\sum_{i=1}^N\pi_i(R_{\kappa}u)_i\Big|\le \prod_{\kappa=1}^{\ell+1} \|R_\kappa u\|_{L_1(\pi)}.
\end{equation}

Next, by Lemma~\ref{lem:alternate} with $q=2m$ and $k=r_1-1$ we have
\begin{align*}
\|R_1u\|_{L_1(\pi)}&= \|(UT_{1})(U T_{2})\cdots (UT_{r_1-1})u\|_{L_1(\pi)}\\&\le \|u\|_{L_{2m}(\pi)}^{r_1}\prod_{j=1}^{r_1-1} \|T_j\|_{L_{\frac{4m}{2m+r_1-2j}}(\pi) \rightarrow L_{\frac{4m}{2m+r_1-2j}}(\pi)}\\&\!\!\!\stackrel{\eqref{eq:def psj}}{=} \|u\|_{L_{2m}(\pi)}^{r_1}\prod_{j=1}^{r_1-1} \|T_j\|_{L_{p(s,j)}(\pi) \rightarrow L_{p(s,j)}(\pi)}.
\end{align*}
In the same vein, for every $k\in \{2,\ldots,\ell\}$,
$$
\|R_\kappa u\|_{L_1(\pi)}\le \|u\|_{L_{2m}(\pi)}^{r_\kappa-r_{\kappa-1}}\prod_{j=r_{\kappa-1}+1}^{r_\kappa-1} \|T_j\|_{L_{p(s,j)}(\pi) \rightarrow L_{p(s,j)}(\pi)},
$$
and also
$$
\|R_{\ell+1} u\|_{L_1(\pi)}\le \|u\|_{L_{2m}(\pi)}^{2m-r_\ell}\prod_{j=r_\ell+1}^{2m-1} \|T_j\|_{L_{p(s,j)}(\pi) \rightarrow L_{p(s,j)}(\pi)}.
$$
We therefore have
\begin{equation}\label{eq:tekescope exponents}
\prod_{\kappa=1}^{\ell+1} \|R_\kappa u\|_{L_1(\pi)}\le \|u\|_{L_{2m}(\pi)}^{2m}\prod_{\substack{j\in [2m-1]\\s_j=1}}\big\|T_j\big\|_{L_{p(s,j)}(\pi) \rightarrow L_{p(s,j)}(\pi) }.
\end{equation}
By substituting~\eqref{eq:tekescope exponents} into~\eqref{eq:Rj product} and then substituting the resulting estimate into~\eqref{eq:mainclaiminmono}, we arrive at~\eqref{eq:splittingj}.
\end{proof}

In light of Lemma~\ref{lem:increasing}, the following lemma is highly relevant to our goal of proving Theorem~\ref{momentbound}.

\begin{lem}\label{finbnew}
Suppose that $m\in \N$ satisfies $em\le n(1-\lambda)$. Then,
\begin{equation}\label{eq:secondlemmamoments}
\bigg(\sum_{\substack{v_0,\ldots,v_{2m-1} \in \N\cup \{0\}\\ v_0+\cdots+v_{2m-1} \le n-1}}
\big\|U A^{v_1}UA^{v_2}\cdots UA^{v_{2m-1}}u \big\|_{L_1(\pi)}\bigg)^{\frac{1}{2m}}\lesssim \frac{\sqrt{n/m}}{\sqrt{1-\lambda}}\|u\|_{L_{2m}(\pi)}.
\end{equation}
\end{lem}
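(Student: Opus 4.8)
The plan is to apply Lemma~\ref{lem:holderapplication} to each summand on the left-hand side of~\eqref{eq:secondlemmamoments}, with the choice $T_j\eqdef A^{v_j}-E_\pi$ for $j\in[2m-1]$. Since $A$ is stochastic and $\pi$-stationary we have $AE_\pi=E_\pi A=E_\pi^2=E_\pi$, hence $A^{v_j}=(A-E_\pi)^{v_j}+E_\pi=T_j+E_\pi$ for every $v_j\in\N\cup\{0\}$ (with $T_j=I-E_\pi$ when $v_j=0$ and $T_j=(A-E_\pi)^{v_j}$ when $v_j\ge 1$). Therefore $U(T_1+E_\pi)U(T_2+E_\pi)\cdots U(T_{2m-1}+E_\pi)u=UA^{v_1}UA^{v_2}\cdots UA^{v_{2m-1}}u$, and Lemma~\ref{lem:holderapplication} gives
\begin{equation}\label{eq:planA}
\big\|UA^{v_1}\cdots UA^{v_{2m-1}}u\big\|_{L_1(\pi)}\le \|u\|_{L_{2m}(\pi)}^{2m}\sum_{s\in S_{2m-1}}\prod_{\substack{j\in[2m-1]\\ s_j=1}}\big\|T_j\big\|_{L_{p(s,j)}(\pi)\to L_{p(s,j)}(\pi)}.
\end{equation}
The remaining work is to bound these operator norms and then sum~\eqref{eq:planA} over the admissible $(v_0,\ldots,v_{2m-1})$ and over $s\in S_{2m-1}$.

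For the operator norms: $A^{v}$ is a contraction of $L_1(\pi)$ (by $\pi$-stationarity) and of $L_\infty(\pi)$ (as $A$ is stochastic), and $E_\pi$ is a contraction of every $L_p(\pi)$, so $\|T_j\|_{L_1(\pi)\to L_1(\pi)}\le 2$ and $\|T_j\|_{L_\infty(\pi)\to L_\infty(\pi)}\le 2$; moreover $\|T_j\|_{L_2(\pi)\to L_2(\pi)}=\|(A-E_\pi)^{v_j}\|_{L_2(\pi)\to L_2(\pi)}\le\lambda^{v_j}$ for $v_j\ge 1$ and $\le 1$ for $v_j=0$. By the Riesz--Thorin theorem, for every $p\in(1,\infty)$ we get $\|T_j\|_{L_p(\pi)\to L_p(\pi)}\le 2^{1-\delta_p}\lambda^{v_j\delta_p}$, where $\delta_p\eqdef\min\{2-2/p,\,2/p\}\in(0,1]$ (with the convention $\lambda^{0}=1$). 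A short computation from~\eqref{eq:def i1i2} and~\eqref{eq:def psj} shows that $\delta_{p(s,j)}=1-\frac{|i_1(s,j)+i_2(s,j)-2j|}{2m}$; equivalently, if $j$ sits at position $t\in\{0,\ldots,L-1\}$ in a run of $1$'s of length $L$ in $s$, then $\delta_{p(s,j)}=1-\frac{|2t-(L-1)|}{2m}$. Plugging these into~\eqref{eq:planA} and bounding each $2^{1-\delta_{p(s,j)}}\le 2$,
\begin{equation}\label{eq:planB}
\big\|UA^{v_1}\cdots UA^{v_{2m-1}}u\big\|_{L_1(\pi)}\le \|u\|_{L_{2m}(\pi)}^{2m}\sum_{s\in S_{2m-1}}2^{|s|}\prod_{\substack{j\in[2m-1]\\ s_j=1}}\lambda^{v_j\delta_{p(s,j)}},\qquad |s|\eqdef\sum_{j=1}^{2m-1}s_j.
\end{equation}

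Now sum~\eqref{eq:planB} over $v_0+\cdots+v_{2m-1}\le n-1$. Fix $s\in S_{2m-1}$ with $|s|=k$, and set $J_1=\{j:s_j=1\}$ and $J_0=\{0,1,\ldots,2m-1\}\setminus J_1$, so $|J_1|=k$ and $|J_0|=2m-k$. For each fixed $(v_j)_{j\in J_1}$ there are at most $\binom{n-1+2m-k}{2m-k}$ admissible choices of $(v_j)_{j\in J_0}$, whence
\begin{align*}
\sum_{v_0+\cdots+v_{2m-1}\le n-1}\ \prod_{j\in J_1}\lambda^{v_j\delta_{p(s,j)}}\ &\le\ \binom{n-1+2m-k}{2m-k}\prod_{j\in J_1}\frac{1}{1-\lambda^{\delta_{p(s,j)}}}\\
&\le\ \binom{n-1+2m-k}{2m-k}\,\frac{1}{(1-\lambda)^{k}}\prod_{j\in J_1}\frac{1}{\delta_{p(s,j)}},
\end{align*}
where the last step uses $1-\lambda^{\delta}\ge\delta(1-\lambda)$, i.e.\ convexity of $x\mapsto\lambda^{x}$. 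The pivotal point is that $\prod_{j\in J_1}\delta_{p(s,j)}^{-1}\le e^{4m}$ --- a universal constant to the power $m$, which is exactly what the delicate definition~\eqref{eq:def psj} is designed to deliver; the trivial bound $\delta_{p(s,j)}\ge\frac1{2m}$ would instead cost a catastrophic factor $(2m)^{k}$. To see it, note that $\prod_{j\in J_1}\delta_{p(s,j)}$ factors over the runs of $1$'s of $s$: a run of odd length $L=2a+1$ contributes $\prod_{b=1}^{a}(1-\frac bm)^{2}$, and $\prod_{b=1}^{a}\frac{m}{m-b}=\frac{m^{a}}{a!\binom{m-1}{a}}\le e\,e^{a}$ (using $\binom{m-1}{a}\ge(\frac{m-1}{a})^{a}$, $\frac{a^{a}}{a!}\le e^{a}$, $a\le m-1$, and $(\frac{m}{m-1})^{a}\le e$), with the even-length case bounded identically; summing the resulting exponents over all runs, and using that the number of runs is at most $2m-k\le m$ while $\sum_r a_r\le k/2\le m$, yields $\prod_{j\in J_1}\delta_{p(s,j)}^{-1}\le e^{2\sum_r(a_r+1)}\le e^{4m}$.

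Finally, summing over $s$: there are exactly $\binom{k}{2m-1-k}\le 2^{k}$ vectors $s\in S_{2m-1}$ with $|s|=k$, and one always has $k\ge m$ (a string in $\{0,1\}^{2m-1}$ that ends in $1$ and has no two consecutive $0$'s contains at least $m$ ones). Combining the displays above,
\[
\sum_{\substack{v_0,\ldots,v_{2m-1}\in\N\cup\{0\}\\ v_0+\cdots+v_{2m-1}\le n-1}}\big\|UA^{v_1}\cdots UA^{v_{2m-1}}u\big\|_{L_1(\pi)}\ \le\ e^{4m}\,\|u\|_{L_{2m}(\pi)}^{2m}\sum_{k=m}^{2m-1}4^{k}\binom{n-1+2m-k}{2m-k}\frac{1}{(1-\lambda)^{k}}.
\]
Writing $j=2m-k\in\{1,\ldots,m\}$, bounding $\binom{n-1+j}{j}\le\frac{(2n)^{j}}{j!}$ (legitimate since $n\ge em>2m$), using $\frac{m^{j}}{j!}\le e^{m}$, and invoking the hypothesis $em\le n(1-\lambda)$ in the form $(n(1-\lambda))^{j-m}\le(em)^{j-m}$, one finds that each of these (at most $m$) summands is $\le C^{m}\frac{n^{m}}{m^{m}(1-\lambda)^{m}}$ for a universal $C$, hence the whole sum is $\le C_0^{\,2m}\frac{n^{m}}{m^{m}(1-\lambda)^{m}}\|u\|_{L_{2m}(\pi)}^{2m}$ for a universal $C_0$; extracting $2m$-th roots gives~\eqref{eq:secondlemmamoments}. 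I expect the main obstacles to be exactly the two displayed estimates: the bound $\prod_{j\in J_1}\delta_{p(s,j)}^{-1}\le e^{4m}$ (whose proof is the whole reason for~\eqref{eq:def psj} and crucially uses $|s|\ge m$), and the final arithmetic estimate, where the hypothesis $em\le n(1-\lambda)$ together with $|s|\ge m$ is precisely what prevents the sum from blowing up.
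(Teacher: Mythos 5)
Your proof is correct and follows essentially the same strategy as the paper: apply Lemma~\ref{lem:holderapplication} with $T_j=A^{v_j}-E_\pi$, control the operator norms $\|T_j\|_{L_{p(s,j)}\to L_{p(s,j)}}$ by Riesz--Thorin interpolation, sum the resulting geometric series using $1-\lambda^{\beta}\ge\beta(1-\lambda)$, prove $\prod_{j\in Q_1}\beta(s,j)^{-1}\le e^{O(m)}$, and finish with a Stirling-type estimate. A few points of execution differ and are worth flagging. (i) You interpolate $T_j=A^{v_j}-E_\pi$ directly, between the endpoint bounds $\|T_j\|_{L_1\to L_1},\|T_j\|_{L_\infty\to L_\infty}\le 2$ and $\|T_j\|_{L_2\to L_2}\le\lambda^{v_j}$, which gives $\|T_j\|_{L_p\to L_p}\le 2^{1-\delta_p}\lambda^{v_j\delta_p}$ with a fixed factor $\le 2$ per index $j$. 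The paper instead interpolates $A-E_\pi$ and then raises the result to the power $v_j$; taken literally this yields $(2\lambda^{\beta})^{v_j}=2^{v_j}\lambda^{\beta v_j}$, and the passage to the displayed ``it suffices'' reduction silently drops that $2^{v_j}$. Your formulation is the cleaner one and is what the argument actually needs: the loss becomes $2^{|s|}\le 4^m$, which is harmlessly absorbed after taking $2m$-th roots. (ii) Your proof of $\prod_{j}\delta_{p(s,j)}^{-1}\le e^{O(m)}$ works run by run, using $\prod_{b=1}^a m/(m-b)\le e^{a+1}$ (valid since $a\le m-1$) and then bounding the sum of run-half-lengths by $m$ and the number of runs by $2m-|s|\le m$. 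The paper's route is different: each $j$ in a run of length $\le 1+\tfrac{3m}{2}$ contributes at most $4$, and the at most one longer run is handled by Stirling. Both give $e^{O(m)}$. (iii) For the final sum over $s$ you count $|\{s\in S_{2m-1}:|s|=k\}|=\binom{k}{2m-1-k}\le 2^k$, note $k\ge m$, and bound each $k$-slice directly; the paper instead uses $|S_{2m-1}|\le e^{O(m)}$ together with the monotonicity of $z\mapsto((1-\lambda)n/z)^z$ on $[0,(1-\lambda)n/e]$, which is where the hypothesis $em\le n(1-\lambda)$ enters for them. In your version that hypothesis enters through $(n(1-\lambda))^{j-m}\le(em)^{j-m}$ for $j\le m$; it plays the same role. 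One small wording slip: the identity ``$A^{v_j}=(A-E_\pi)^{v_j}+E_\pi$ for every $v_j\in\N\cup\{0\}$'' fails at $v_j=0$, where $(A-E_\pi)^0+E_\pi=I+E_\pi\ne I$. As your parenthetical makes clear, what you actually use is $T_j=A^{v_j}-E_\pi$, which equals $I-E_\pi$ when $v_j=0$; the bounds $\|I-E_\pi\|_{L_1\to L_1},\|I-E_\pi\|_{L_\infty\to L_\infty}\le 2$ and $\|I-E_\pi\|_{L_2\to L_2}\le 1=\lambda^{0}$ still hold, so nothing is affected. Overall: same proof skeleton, with your Riesz--Thorin step being a small but genuine tightening of the paper's write-up.
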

\begin{proof} Fix $v_0,\ldots,v_{2m-1} \in \N\cup \{0\}$ and denote  $T_j=A^{v_j}-E_\pi$ for every $j\in \{0,\ldots,2m-1\}$. Then,
\begin{align}
\begin{split}
\big\|U A^{v_1}UA^{v_2}\cdots UA^{v_{2m-1}}u \big\|_{L_1(\pi)}&=\big\|U (T_1+E_\pi)U(T_2+E_\pi)\cdots U(T_{2m-1}+E_\pi)u \big\|_{L_1(\pi)}\\
&\le \|u\|_{L_{2m}(\pi)}^{2m} \sum_{s \in S_{2m-1}}\prod_{\substack{j\in [2m-1]\\s_j=1}}\big\|T_j\big\|_{L_{p(s,j)}(\pi) \rightarrow L_{p(s,j)}(\pi) },
\end{split}\label{eq:use holder lemma}
\end{align}
where the last step of~\eqref{eq:use holder lemma} is an application of Lemma~\ref{lem:holderapplication}.

Fixing $j\in \{0,\ldots,2m-1\}$, note that $AE_\pi=E_\pi$ since $A$ is stochastic and the columns of $E_\pi$ are constant, and also $E_\pi A=E_\pi$ since $A$ is $\pi$-stationary. Consequently $T_j=A^{v_j}-E_\pi= (A-E_\pi)^{v_j}$. So, for every $p\ge 1$,
\begin{equation}\label{eq:take vj power out}
\|T_j\|_{L_p(\pi)\to L_p(\pi)}=\big\|(A-E_\pi)^{v_j}\big\|_{L_p(\pi)\to L_p(\pi)}\le \|A-E_\pi\|_{L_p(\pi)\to L_p(\pi)}^{v_j}.
\end{equation}
By definition, $\|A-E_\pi\|_{L_2(\pi)\to L_2(\pi)}=\lambda$. As $A$ and $E_\pi$ are averaging operators, by convexity and the triangle inequality  $\|A-E_\pi\|_{L_r(\pi)\to L_r(\pi)}\le \|A\|_{L_r(\pi)\to L_r(\pi)}+\|E_\pi\|_{L_r(\pi)\to L_r(\pi)}= 2$ for all $r\ge 1$. By the Riesz--Thorin interpolation theorem~\cite{Rie27,Tho48} (see e.g.~Chapter~IV in the textbook~\cite{Kat04}), this implies that
\begin{equation}\label{eq:interpolated gap}
\|A-E_\pi\|_{L_p(\pi)\to L_p(\pi)}\le 2\lambda^{2\min\left\{\frac{1}{p},1-\frac{1}{p}\right\}}.
\end{equation}

A substitution of~\eqref{eq:interpolated gap} into~\eqref{eq:take vj power out}, followed by a substitution of the resulting bound into~\eqref{eq:use holder lemma} shows that in order to prove the desired inequality~\eqref{eq:secondlemmamoments} it suffices to establish the following estimate.
\begin{equation}\label{eq:first reduction big sum}
\bigg(\sum_{s \in S_{2m-1}}\sum_{\substack{v_0,\ldots,v_{2m-1} \in \N\cup \{0\}\\ v_0+\cdots+v_{2m-1} \le n-1}}\prod_{\substack{j\in [2m-1]\\s_j=1}}\lambda^{\beta(s,j)v_j}\bigg)^{\frac{1}{m}}\lesssim \frac{n}{m(1-\lambda)},
\end{equation}
where for every $s\in S_{2m-1}$ and $j\in [2m-1]$ such that $s_j=1$, we denote
\begin{equation}\label{eq:beta formula}
\beta(s,j)\eqdef 2\min\left\{\frac{1}{p(s,j)},1-\frac{1}{p(s,j)}\right\}\stackrel{\eqref{eq:def psj}}{=} 1-\frac{|i_1(s,j)+i_2(s,j)-2j|}{2m}.
\end{equation}

Fix some $s \in S_{2m-1}$. Denote $Q_0=\{j\in [2m-1]: s_j=0\}$ and $Q_1=[2m-1]\setminus Q_0$. Thus $|Q_0|+|Q_1|=2m-1$ and by the definition of $S_{2m-1}$ we have $|Q_1| \geq m$. With this notation, we have the following bound.
\begin{align*}
\sum_{\substack{v_0,\ldots,v_{2m-1} \in \N\cup \{0\}\\ v_0+\cdots+v_{2m-1} \le n-1}}&\prod_{\substack{j\in [2m-1]\\s_j=1}}\lambda^{\beta(s,j)v_j}
\\&= \sum_{\substack{(v_i)_{i\in \{0\}\cup Q_0}\in (\N\cup\{0\})^{\{0\}\cup Q_0}\\ \sum_{i\in \{0\}\cup Q_0} v_i\le n-1}}\sum_{\substack{(v_j)_{j\in Q_1}\in (\N\cup\{0\})^{Q_1}\\ \sum_{j\in Q_1}v_j\le n-1-\sum_{i\in \{0\}\cup Q_0} v_i}}\prod_{j\in Q_1}\lambda^{\beta(s,j)v_j}
\\&\le
\Big|\Big\{(v_i)_{i\in \{0\}\cup Q_0}\in (\N\cup\{0\})^{\{0\}\cup Q_0}:\sum_{i\in \{0\}\cup Q_0} v_i\le n-1\Big\}\Big|\cdot \sum_{(v_j)_{j\in Q_1}\in (\N\cup\{0\})^{Q_1}}\prod_{j\in Q_1}\lambda^{\beta(s,j)v_j} \\
&= \sum_{\ell=0}^{n-1}\binom{|Q_0|+\ell}{|Q_0|}\prod_{j\in Q_1}\sum_{i=0}^\infty \lambda^{\beta(s,j)i}\\&= \binom{|Q_0|+n}{|Q_0|+1}\prod_{j\in Q_1}\frac{1}{1-\lambda^{\beta(s,j)}}.
\end{align*}
By the elementary inequality $1-\lambda^\beta\ge \beta(1-\lambda)$, which holds for every $\lambda,\beta\in [0,1]$, it follows from this that
\begin{align}\label{eq:bernoulli}
\begin{split}
\sum_{\substack{v_0,\ldots,v_{2m-1} \in \N\cup \{0\}\\ v_0+\cdots+v_{2m-1} \le n-1}}\prod_{\substack{j\in [2m-1]\\s_j=1}}\lambda^{\beta(s,j)v_j}
&\le
\frac{1}{(1-\lambda)^{|Q_1|}}\binom{|Q_0|+n}{|Q_0|+1}\prod_{j\in Q_1}\frac{1}{\beta(s,j)} \\
&=
\frac{(1-\lambda)^{|Q_0|+1}}{(1-\lambda)^{2m}}\binom{|Q_0|+n}{|Q_0|+1}\prod_{j\in Q_1} \frac{1}{\beta(s,j)} \\
&\lesssim
\frac{e^{O(m)}}{(1-\lambda)^{2m}}\left(\frac{(1-\lambda)n}{|Q_0|+1}\right)^{|Q_0|+1}\prod_{j\in Q_1} \frac{1}{\beta(s,j)},
\end{split}
\end{align}
where the last step follows from a straightforward application of Stirling's formula. Consider the function $\psi:[0,\infty)\to [0,\infty)$ that is given by  $\psi(z)=((1-\lambda)n/z)^z$. Then $(\log\psi(z))'=\log ((1-\lambda) n/(ez))$. Hence, $\psi$ is increasing on the interval $[0,(1-\lambda)n/e]$. But $|Q_0|+1=2m-|Q_1|\le m\le (1-\lambda)n/e$, by the assumption on $m$  in the statement of Lemma~\ref{finbnew}. Hence $\psi(|Q_0|+1)\le \psi(m)$, and therefore
\begin{align}\label{eq:psi m}
\frac{1}{(1-\lambda)^{2m}}\left(\frac{(1-\lambda)n}{|Q_0|+1}\right)^{|Q_0|+1}
=
\frac{\psi(|Q_0|+1)}{(1-\lambda)^{2m}}
\le
\frac{\psi(m)}{(1-\lambda)^{2m}}
=
\frac{(n/m)^m}{(1-\lambda)^{m}}.
\end{align}

We will show next that
\begin{equation}\label{eq:Q1 prod goal}
\prod_{j\in Q_1} \frac{1}{\beta(s,j)}
\stackrel{\eqref{eq:beta formula}}{=}
\prod_{j\in Q_1} \left(1-\frac{|i_1(s,j)+i_2(s,j)-2j|}{2m}\right)^{-1}\le e^{O(m)}.
\end{equation}
In combination with \eqref{eq:bernoulli} and~\eqref{eq:psi m}, this would imply the desired inequality~\eqref{eq:first reduction big sum} because $|S_{2m-1}|\le e^{O(m)}$.

For each $j \in Q_1$ with $i_2(s,j) - i_1(s,j) \le \frac{3m}{2}$ (i.e.,
the consecutive run of $1$s in $s$ to which $j$ belongs is of length at most $1+\frac{3m}{2}$), we have
$|i_1(s,j)+i_2(s,j)-2j| \le \frac{3m}{2}$ and therefore its contribution to the product in~\eqref{eq:Q1 prod goal} is at most $4$.
So, \eqref{eq:Q1 prod goal} holds if there are no runs of $1$s in $s$ of length greater than $\frac{3m}{2}$.
Otherwise, there is exactly one run of $1$s in $s$  of length $d > \frac{3m}{2}$, and its contribution to the product in~\eqref{eq:Q1 prod goal} equals
\begin{align*}
\prod_{i=0}^{\left\lfloor\frac{d-1}{2}\right\rfloor} \left(\frac{2m}{2m-d+1+2i}\right)^2
\le
\prod_{i=0}^{2\left\lfloor\frac{d-1}{2}\right\rfloor} \frac{2m}{2m-d+1+i}
\le
\prod_{k=2}^{2m} \frac{2m}{k} =\frac{(2m)^{2m-1}}{(2m)!}
\le
e^{O(m)},
\end{align*}
where the last step follows from Stirling's formula. This proves our goal~\eqref{eq:Q1 prod goal}.
\end{proof}

\begin{proof}[Completion of the proof of Theorem~\ref{momentbound}] By the triangle inequality in $L_q$ (and stationarity) we have
$$
\Big(\E\big[|f(W_1)+\cdots+f(W_n)|^q\big]\Big)^{\frac{1}{q}} \le \Big(\E
\big[|f(W_1)|^q\big]\Big)^{\frac{1}{q}}+\cdots+\Big(\E
\big[|f(W_n)|^q\big]\Big)^{\frac{1}{q}}=n\Big(\E
\big[|f(W_1)|^q\big]\Big)^{\frac{1}{q}}.
$$
This bound implies the desired estimate~\eqref{eq:q in rhs1} when $q\gtrsim (1-\lambda)n$, so we may assume from now on that $q\le (1-\lambda)n/e$. Let $m\in \N$ be the largest integer such that $2m\le q$. Then, $m,m+1\le q\le  (1-\lambda)n/e$, so the conclusion of Lemma~\ref{finbnew} holds for both $m$ and $m+1$. By Lemma~\ref{lem:increasing} (and Stirling's formula), this gives

\begin{align*}
\bigg(\E \Big[\big(f(W_1)+\cdots+f(W_n)\big)^{2m}\Big]\bigg)^{\frac{1}{2m}}
\lesssim
\sqrt{\frac{nm}{1-\lambda}}\Big(\E \big[|f(W_1)|^{2m}\big]\Big)^{\frac{1}{2m}}
\le
\sqrt{\frac{nq}{1-\lambda}}\Big(\E \big[|f(W_1)|^{2m}\big]\Big)^{\frac{1}{2m}},
\end{align*}
and similarly
\begin{align*}
\bigg(\E \Big[\big(f(W_1)+\cdots+f(W_n)\big)^{2(m+1)}\Big]\bigg)^{\frac{1}{2(m+1)}}
 \lesssim
\sqrt{\frac{nq}{1-\lambda}}\Big(\E \big[|f(W_1)|^{2(m+1)}\big]\Big)^{\frac{1}{2(m+1)}}.
\end{align*}
As in~\eqref{eq:center}, it follows from these bounds (which we derived under the assumption $\E[f(W_1)]=0$) that the norm of the operator $T_\R$ that is given in~\eqref{eq:def TX} is bounded by a universal constant multiple of $\sqrt{q/((1-\lambda)n)}$ both from $L_{2m}(\pi)$ to $L_{2m}(\pi)$ and from $L_{2(m+1)}(\pi)$ to $L_{2(m+1)}(\pi)$. Since $2m\le q\le 2(m+1)$, another application of the Riesz--Thorin theorem gives that the norm of $T_\R$ from $L_{q}(\pi)$ to $L_{q}(\pi)$ is also bounded by a universal constant multiple of $\sqrt{q/((1-\lambda)n)}$. This is precisely the desired  bound~\eqref{eq:q in rhs1}.
\end{proof}

\bibliographystyle{alphaabbrvprelim}
\bibliography{GillmanRelaxation}

\end{document}